\documentclass[11pt,reqno]{amsart}
\usepackage[utf8]{inputenc}
\usepackage[margin=1in]{geometry}
\usepackage{setspace}
\usepackage{parskip}
\usepackage{amsmath}
\usepackage{amssymb}
\usepackage{amsthm}
\usepackage{mathtools}
\usepackage[mathscr]{euscript}
\usepackage{bbm}
\usepackage{braket}
\usepackage{arydshln}
\usepackage{shortcutpackagev11}
\usepackage{graphicx}
\graphicspath{{./images/}}
\usepackage{tikz}
\usetikzlibrary{cd}
\usepackage{quiver}
\usepackage{pgfplots}
\pgfplotsset{width=10cm, compat=1.9}
\usepackage{url}
\usepackage{verbatim}
\usepackage{booktabs}
\usepackage{longtable}
\usepackage{array}
\usepackage[colorlinks=true, linkcolor=black, citecolor=black, urlcolor=cyan]{hyperref}
\usepackage[
  backend=biber, style=alphabetic,
  sorting=nyt, maxnames=99
]{biblatex}
\usepackage{thm-restate}

\makeatletter
\renewcommand{\section}{\@startsection{section}{1}%
  \z@{.7\linespacing\@plus\linespacing}{.5\linespacing}%
  {\normalfont\bfseries\centering}}
\makeatother

\newcommand{\abs}[1]{\left|#1\right|}

\newtheorem{theorem}{Theorem}[section]

\newtheorem{lemma}[theorem]{Lemma}
\newtheorem{corollary}[theorem]{Corollary}
\theoremstyle{plain}
\newtheorem{definition}[theorem]{Definition}

\theoremstyle{plain}

\newtheorem{remark}[theorem]{Remark}

\numberwithin{equation}{section}
\title{Bounding The Number of Zeros Near the Central Point in Families of Cuspidal Newforms}

\author{Lucas Chen}
\address{Department of Mathematics, University of Chicago}
\email{\href{mailto:}{lucasch@uchicago.edu}}

\author{Christopher Housholder}
\address{Department of Mathematics, Missouri State University}
\email{\href{mailto:}{christopherlancehousholder@gmail.com}}

\author{Joshua Khan}
\address{Department of Mathematics, University of Rochester}
\email{\href{mailto:}{joshuakhan1307@gmail.com}}

\author{Steven J. Miller}
\address{Department of Mathematics, Williams College}
\email{\href{mailto:}{sjm1@williams.edu}}

\author{Devayani Pradhan}
\address{Department of Mathematics, University of Michigan}
\email{\href{mailto:}{pradhand@umich.edu}}

\begin{document}
\begin{spacing}{1.05}

\begin{abstract}
We study low-lying zeros in families of even holomorphic cuspidal newforms of fixed weight and prime level, with particular emphasis on the number of forms having a zero in a prescribed normalized window about the central point and on the distribution of the number of such zeros among the forms. We quantify the number of forms having at least one zero in the window and study the distribution of the number of zeros in that window among the forms. Assuming the Generalized Riemann Hypothesis, we obtain new lower bounds for the number of forms having a low-lying zero. We first prove that, along an infinite sequence of prime levels $N$, the number of such forms is $\gg N^{7/8}\log N$. We then use higher centered moments and an appropriate test function to show that a positive proportion of the family has a zero in a prescribed window. Finally, we obtain polynomial upper-tail bounds for the number of zeros occurring there and show that a positive proportion of forms have a bounded, nonzero number of low-lying zeros.
\end{abstract}

\maketitle

\vspace{-6mm}

\tableofcontents

\section{Introduction}
\subsection{Motivation}
Zeros of automorphic L-functions are closely connected to arithmetic questions, including the analytic ranks of elliptic curves. Their distribution near the central point is also expected to exhibit universal behavior governed by Random Matrix Theory \cite{BSD1, BSD2, katzSarnak1999book, katzSarnak1999zeros}. Assuming the Generalized Riemann Hypothesis (GRH), non-trivial zeros lie on the critical line, so we may write
\begin{equation}\rho_{f,j} = \frac{1}{2}+i\gamma_{f,j}.\end{equation}
Zeros at and near the central point are of particular interest because central zeros are related to arithmetic ranks, while the statistical behavior of low-lying zeros is predicted by Random Matrix Theory \cite{ILS, montgomery1973, rudnickSarnak1996}. To study the statistical behavior of zeros near the central point, Katz and Sarnak introduced the $n$-level density \cite{katzSarnak1999book, katzSarnak1999zeros}, which averages across members of a given family. The zeros are normalized by the analytic conductor $c_f$ so their mean local spacing is asymptotically one:
\begin{equation}\widetilde{\gamma}_{f,j} \ = \ \frac{\log c_f}{2\pi}\gamma_{f,j}.\end{equation}
Evaluating central zeros yields critical bounds on the proportion of $L$-functions with analytic rank $r \geq 2$ \cite{ILS, katzSarnak1999zeros}. The Katz--Sarnak conjecture predicts that the low-lying zeros in a family converge, after normalization, to the local statistics of eigenvalues near 1 in the corresponding classical compact group. For orthogonal families, this prediction in particular constrains the frequency of forms with high-order vanishing at the central point \cite{katzSarnak1999zeros}. Iwaniec, Luo, and Sarnak used the 1-level density to obtain quantitative bounds on the frequency of central zeros in families of cusp forms \cite{ILS}. Hughes and Miller subsequently tightened these theoretical limits by analyzing the centered moments of the one-level density, a method that circumvents the Fourier support restrictions of first-order statistics \cite{Hughes_2007}. Through the modularity of elliptic curves and the Birch--Swinnerton-Dyer conjecture, bounds on central vanishing in suitable families of modular $L$-functions have consequences for the distribution of elliptic curves of high rank. \cite{BSD1, BSD2}.

In this paper, we shift focus from individual bounds to examining the lowest zeros across an entire family. Previous studies have bounded this lowest zero for individual $L$-functions \cite{BSD1, BSD2, FM, hughesRudnick2003, M, Mes}. In this paper, we shift focus from individual bounds to examining the lowest zeros across an entire family of $L$-functions. We specifically study holomorphic cuspidal newforms of fixed even weight $k$ and prime level $N \to \infty$. By removing the known central zero contribution from odd forms, we focus on the even family, which is modeled by $SO(\mathrm{even})$ symmetry. The $n$-level densities for these families can be directly related to the centered moments of the one-level statistic \cite{cohen2026moments, Hughes_2007, hughesRudnick2003}:\begin{equation}D(f;\phi) \ = \ \sum_j\phi(\widetilde{\gamma}_{f,j}).\end{equation}
Building on methods from \cite{GoesMiller2010, Mes}, previous work gives an upper bound for the smallest normalized zero occurring among the forms in this family. Using a recent construction of \cite{ARORA2025262} of a specific even Schwartz function $\phi_\omega$, on obtains an explicit window $(-\omega,\omega)$ that contains a zero for at least one form in the family. Under current Fourier support limitations, this explicit calculation yields a window of approximately $0.21864$ times the average spacing.

We move beyond the existence of a low zero somewhere in the family and ask how many forms contain a zero in a prescribed window about the central point. We obtain lower bounds for the number of forms containing at least one zero in the window, prove that a positive proportion of forms contain such a zero for suitable windows, and control the possible concentration of these zeros on individual forms. In this section, we provide the necessary background for the counting arguments developed in this paper. First, we introduce the family of modular forms and the associated $L$-functions, followed by the normalized zero statistics and centered moments used to detect zeros near the central point. Finally, we describe the test functions and previously known first-zero bounds. Notations and conventions have been listed explicitly in Section \ref{notation}. 

\subsection{Forms and $L$-Functions}
Recall that a matrix $\ga\in \SL_2(\Z)$ of the form
\begin{equation}
\ga \ = \ \begin{bmatrix}
a & b\\
c & d
\end{bmatrix}
\end{equation}
acts on a complex number $z$ by
\begin{equation}
\ga z \ = \ \f{az+b}{cz+d}.
\end{equation}
Functions on the upper half-plane that behave in a certain way under the action of this group are studied extensively in analytic number theory.

\begin{definition}
A modular form of weight $k$ is a holomorphic function $f:\HH\to\C$ that satisfies
\begin{equation}
f(\ga z) \ = \ (cz+d)^k f(z)
\end{equation}
for all
\begin{equation}
\ga \ = \ \begin{bmatrix}
a & b\\
c & d
\end{bmatrix}\in \operatorname{SL}_2(\Z).
\end{equation}
We also require that $f$ be bounded as $\Im(z)\to\infty$.
\end{definition}

We can impose some additional conditions on modular forms. When a modular form satisfies $f(z)\to 0$ as $\Im(z)\to \infty$, we say that $f$ is also a cusp form.

\begin{definition}
A modular form of weight $k$ and \textbf{level $N$} is a function that must satisfy the transformation rule when $c\equiv 0\pmod N$ for $\ga \ = \ \begin{bsmallmatrix}
a & b\\
c & d
\end{bsmallmatrix}$. We also require the form to be holomorphic at every cusp.

We say that $f$ is a newform of level $N$ if it does not arise from a form of lower level $M<N$, where $M\mid N$. We use $H^\star_k(N)$ to denote the set of normalized holomorphic cuspidal newforms of weight $k$ and level $N$.
\end{definition}

For each cuspidal newform, we associate an \textit{$L$-function}.

\begin{definition}
For every $f\in H^\star_k(N)$, consider the Fourier expansion
\begin{equation}
f(z) \ = \ \sum_{n=1}^\infty a_f(n)e^{2\pi i n z}.
\end{equation}
Let
\begin{equation}
\lm_f(n) \ = \ a_f(n)n^{-(k-1)/2}
\end{equation}
and define the $L$-function associated to $f$ as
\begin{equation}
L(s,f) \ = \ \sum_{n=1}^\infty \lm_f(n)n^{-s}.
\end{equation}
Its completion is given by
\begin{equation}
\Lambda(s,f) \ := \ \left(\f{\sqrt{N}}{2\pi}\right)^s
\Gamma\left(s+\f{k-1}{2}\right)L(s,f).
\end{equation}
\end{definition}

Then we split $H^\star_k(N)$ into two subsets depending on the functional equation of $\Lambda(s,f)$.

\begin{definition}
Note that $\Lambda(s,f)$ satisfies
\begin{equation}
\Lambda(s,f) \ = \ \e_f\Lambda(1-s,f),
\end{equation}
where $\e_f=\pm1$. We define
\begin{align}
H^+_k(N) &\ := \ \Set{f\in H^\star_k(N)\mid \e_f=1},\\
H^-_k(N) &\ := \ \Set{f\in H^\star_k(N)\mid \e_f=-1},
\end{align}
which are called the sets of even forms and odd forms, respectively. 
Here, 
\begin{equation}
    \Ff_N \ := \ H^+_k(N).
\end{equation}
\end{definition}

\subsection{Low-Lying Zeros and $n$-Level Densities}
The zeros of $L$-functions are extensively studied in the literature, especially the zeros closer to the central point, as they are of interest in number-theoretic settings and tied to valuable arithmetic information. Two complementary statistical regimes are particularly relevant here. The $n$-level correlation statistics describe local correlations among zeros high on the critical line of a fixed $L$-function, whereas the Katz--Sarnak framework studies zeros near the central point across a family as the analytic conductor tends to infinity \cite{katzSarnak1999book, katzSarnak1999zeros}.

The study of correlation statistics is rooted in Montgomery's 1972 discovery of a profound connection between the distribution of the Riemann zeta function's zeros and the eigenvalue distribution of random Hermitian matrices \cite{montgomery1972}. He conjectured that the $n$-level correlation of these zeros exactly matches that of the Gaussian Unitary Ensemble (GUE). This conjecture was later verified for suitably restricted test functions by Hejhal \cite{Hej} for the 3-level correlation of the zeta function, and by Rudnick and Sarnak \cite{RS1} for all $n$ across all automorphic $L$-functions. The collection of $n$-level correlation statistics determines the limiting local point-process statistics, and hence encodes the distribution of spacings between nearby zeros. This theoretical spacing was famously corroborated by Odlyzko's numerical computations near the $10^{20}$-th zero, which observed that the main term of the spacing between nearby zeros closely aligned with GUE predictions (see \cite{odlyzko1987, H, BFMT-B, Con} and their references for more details on these connections). 

Random Matrix Theory predicts that, after normalization, the local statistics of low-lying zeros agree with those of eigenvalues near 1 in the corresponding classical compact group.. To study the zeros near the central point, Katz and Sarnak introduced the $n$-level density, which is affected almost exclusively by contributions of zeros near to the central point relative to the average spacing there.

First, we note that the zeros near the central point are, on average, spaced apart by a distance comparable to
\begin{equation}
\f{2\pi}{\log(c_f)},
\end{equation}
where $c_f$ is called the \textit{analytic conductor}. For fixed weight $k$ and prime level $N$, the analytic conductor is independent of the choice of $f$ in the even cuspidal newforms family and is proportional to $kN^2$. Using this as a scaling factor, we define the $n$-level density.

\begin{definition}[$n$-level Density]
Let $\Phi:\R^n\to\R$ be a test function. The $n$-level density of an $L$-function $L(s,f)$ is given by
\begin{equation}
D_n(f;\Phi)
\ :=\
\sum_{\substack{j_1,\ldots,j_n\\
j_r\neq\pm j_s\text{ for }r\neq s}}
\Phi\left(
\f{\log c_f}{2\pi}\ga_{f,j_1},
\ldots,
\f{\log c_f}{2\pi}\ga_{f,j_n}
\right),
\end{equation}
where the nontrivial zeros (assuming GRH, so all zeros lie on the critical line) of $L(s,f)$ are written as
\begin{equation}
\rho_{f,j} \ = \ \f{1}{2}+i\ga_{f,j}.
\end{equation}
\end{definition}

Note that we are summing over $n$-tuples of zeros in which no two entries of the tuple are equal up to sign. The test function is chosen to optimize both the bounds and the feasibility of calculations. We see behavior similar to the Heisenberg uncertainty principle: if the test function is concentrated near the central point and decays rapidly, we can gain more information about the zeros closer to the central point. However, as we approximate a delta spike, the Fourier transform becomes more spread out, leading to more terms in the number-theoretic expansions until the available averaging formulas no longer apply. We optimize over this subset of test functions based on additional criteria.

\begin{remark}
All zero locations are measured in units of the mean spacing near the central point. Thus a normalized distance of 1 corresponds to an unnormalized distance of approximately $2\pi/\log(c_f)$.
\end{remark}

In the following sections, $\phi$ is an even Schwartz function whose Fourier transform $\hat{\phi}$ has compact support.

\subsection{Centered Moments}
In certain cases, the $n$-th centered moments are used instead of the $n$-level density. These are essentially equivalent after some combinatorics. However, the expansions of the $n$-th centered moments are better suited for obtaining bounds, as shown by Hughes--Miller \cite{Hughes_2007}.

\begin{definition}
The $n$-th centered moment for the families of even or odd cuspidal newforms of level $N$ is defined by
\begin{equation}
\mathcal{D}_n^{\pm}(N,\phi)
\ :=\
\la
\left(D(f;\phi)-\la D(f;\phi)\ra_{N;\pm}\right)^n
\ra_{N;\pm},
\end{equation}
where $D(f;\phi)$ is the one-level density of $f$. Here, $\la\cdot\ra_{N;\pm}$ denotes the average over the family of even or odd level-$N$ cuspidal newforms, where $\pm$ denotes the even or odd family.
\end{definition}

Centering removes the deterministic first-order contribution and isolates the fluctuation of the one-level statistic from form to form. This is particularly useful because a sign-changing test function produces a common baseline for forms with no zeros in a specified window. The centered moments then measure how far the family average is forced to be from that baseline.

We have the following theorem used to calculate the $n$-th centered moment, given by Theorem 1.2 of \cite{cohen2026moments}. 

\begin{theorem}\label{densityeqn}
Let $n\geq 2$ and let $a$ be an integer that satisfies
\begin{equation}
0\leq a\leq \f{n}{2}.
\end{equation}
(In specific restricted expansions, one may substitute the parameter configuration \begin{equation} a \ = \ \f{n}{2} \end{equation} depending on the summation definitions.) 
Suppose that
\begin{equation}
\supp(\hat{\phi})
\ \subset\
\left(-\f{1}{n-a},\f{1}{n-a}\right).
\end{equation}
Assume GRH for Dirichlet $L$-functions and for cuspidal newforms and their symmetric squares. Then
\begin{equation}
\lim_{\substack{N\to\infty\\N\text{\rm prime}}}
\mathcal{D}_n^{\pm}(N,\phi)
\ =\
\mathbbm{1}_{\mathrm{even}}(n)(n-1)!!
\left(V_\phi^2\right)^{n/2}
\ \pm\ S(n,a;\phi),
\end{equation}
where
\begin{equation}
V_\phi^2
\ :=\
2\int_{-\infty}^\infty
|y|\hat{\phi}(y)^2\,dy
\end{equation}
and
\begin{equation}
S(n,a;\phi)
\ :=\
\sum_{\ell=0}^{\left\lfloor \f{a-1}{2}\right\rfloor}
\f{n!}{(n-2\ell)!\ell!}
\mathcal{R}(n-2\ell,a-2\ell;\phi)
\left(\f{V_\phi^2}{2}\right)^\ell,
\end{equation}
with
\begin{equation}\label{eq: R def}
\begin{split}
\mathcal{R}(m,i;\phi)
\ :=\ &
2^{m-1}(-1)^{m+1}
\sum_{\ell=0}^{i-1}
(-1)^\ell\binom{m}{\ell}\\
&\times
\left(
-\f{1}{2}\phi(0)^m
+
\int_{-\infty}^\infty\cdots
\int_{-\infty}^\infty
\hat{\phi}(x_2)\cdots\hat{\phi}(x_{\ell+1})\right.\\
&\hspace{1.3in}\left.
\times
\int_{-\infty}^\infty
\phi(x_1)^{m-\ell}
\f{
\sin\left(
2\pi x_1
(1+|x_2|+\cdots+|x_{\ell+1}|)
\right)
}{2\pi x_1}\,
dx_1\cdots dx_{\ell+1}
\right).
\end{split}
\end{equation}
\end{theorem}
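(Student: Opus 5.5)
We follow the Hughes--Miller strategy \cite{Hughes_2007}, as refined in \cite{cohen2026moments}.

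\textbf{Explicit formula.} For $f\in H^\pm_k(N)$ we write $D(f;\phi)$ via the explicit formula as
\[
D(f;\phi)=\hat\phi(0)\frac{\log c_f}{\log R}-\frac{2}{\log R}\sum_{p}\frac{\lambda_f(p)\log p}{\sqrt p}\,\hat\phi\Big(\frac{\log p}{\log R}\Big)-\frac{2}{\log R}\sum_p\frac{\lambda_f(p^2)\log p}{p}\,\hat\phi\Big(\frac{2\log p}{\log R}\Big)+O\Big(\frac{1}{\log R}\Big),
\]
with $R\asymp kN^2$. Under GRH for the symmetric squares, the $\lambda_f(p^2)$ sum averages to $-\phi(0)/2$ up to $o(1)$ uniformly enough. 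Hence $D(f;\phi)-\langle D\rangle_{N;\pm}=P(f)+o(1)$, where $P(f)$ is the prime sum with $\lambda_f(p)$.

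\textbf{Moments and Petersson.} We expand $\langle P(f)^n\rangle_{N;\pm}$ as a sum over $p_1,\dots,p_n\le R^{\sigma}$ of $\prod\hat\phi(\log p_i/\log R)\log p_i/\sqrt{p_i}$ against $\langle\lambda_f(p_1)\cdots\lambda_f(p_n)\rangle_{N;\pm}$. The products of $\lambda_f(p_i)$ are converted to sums of $\lambda_f(m)$ by Hecke multiplicativity. The sign restriction is imposed through the weight $(1\pm\epsilon_f)/2$ with $\epsilon_f=-i^k\mu(N)\lambda_f(N)\sqrt N$, which gives two pieces:
\begin{itemize}
\item The first piece is the plain harmonic average $\Delta_{k,N}(m,1)$.
\item The second piece is $\pm\sqrt N\,\Delta_{k,N}(m,N)$.
\end{itemize}
After removing harmonic weights (standard, as in ILS), the Petersson formula turns each into a diagonal term plus Kloosterman-sum terms with $J_{k-1}$ Bessel factors.

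\textbf{Diagonal part.} In the first piece, the diagonal term for $\lambda_f(m)$ with $m=1$ survives only when the primes pair up. Summing over pairings gives $(n-1)!!$ times a product of the variances $2\int|y|\hat\phi(y)^2dy=V_\phi^2$, with the $|y|$ weight coming from the prime number theorem. This gives the Gaussian term $\mathbbm 1_{\mathrm{even}}(n)(n-1)!!(V_\phi^2)^{n/2}$.

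\textbf{Off-diagonal part.} Configurations with multiplicities higher than two contribute $o(1)$ since $\sum_p p^{-3/2}$ converges. For the first piece, the Kloosterman terms are negligible once $\operatorname{supp}\hat\phi\subset(-1/n,1/n)$, by the Weil bound. For the $\pm$ piece, we follow ILS: apply Poisson/Dirichlet-character expansion of the Kloosterman sums and use GRH for Dirichlet $L$-functions. This extracts a nonzero main term from the $J_{k-1}$ transform. Here $\ell$ of the primes are treated ``diagonally'' and the remaining $m-\ell$ feed into the Bessel integral. This is what produces the kernel $\sin(2\pi x_1(1+|x_2|+\dots))/(2\pi x_1)$ paired with $\phi(x_1)^{m-\ell}$ and the factors $\hat\phi(x_j)$.

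\textbf{Main obstacle and combinatorics.} The main obstacle is extending support to $1/(n-a)$. This requires splitting the $n$ primes into $a$ paired ones, which contribute via $V_\phi^2/2$ and the $\ell!$ and binomial counting in $S(n,a;\phi)$, and $n-a$ unpaired ones that enter the Kloosterman analysis. Only the latter constrain the support. The final step is inclusion–exclusion over how many primes coincide. This gives the alternating sum $\sum(-1)^\ell\binom m\ell$ and the $-\tfrac12\phi(0)^m$ correction from $m\neq$ square terms. We then verify by induction on $n$ that the bookkeeping reproduces exactly $\mathcal R(m,i;\phi)$ and the outer sum over $\ell\le\lfloor(a-1)/2\rfloor$.
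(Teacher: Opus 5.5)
The paper gives no proof of this statement: it is imported as Theorem 1.2 of \cite{cohen2026moments}, so your outline has to stand on its own. Its classical parts are the Hughes--Miller template \cite{Hughes_2007} and are fine. These are the explicit formula, the reduction to moments of the $\lambda_f(p)$ prime sum, the $(1\pm\epsilon_f)/2$ split into $\Delta_{k,N}(m,1)$ and $\sqrt N\,\Delta_{k,N}(m,N)$, and the pairing count giving $\mathbbm{1}_{\mathrm{even}}(n)(n-1)!!(V_\phi^2)^{n/2}$. But the whole non-Gaussian content of the theorem comes from the range $1/n\le\sigma<1/(n-a)$, and there the argument is missing. For tuples of distinct primes, $p_1\cdots p_n$ runs up to $N^{n/(n-a)}$, i.e.\ up to $N^2$ when $a=n/2$. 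This assumes zeros are scaled by $\log R$ with $R\asymp k^2N$, as in \cite{ILS, Hughes_2007}; with your $R\asymp kN^2$, every support threshold in the statement would be off by a factor of two. You dispose of the Kloosterman part of $\Delta_{k,N}(m,1)$ only for $\sigma<1/n$, i.e.\ $m<N$. Near the top of the range, the Weil bound combined with $J_{k-1}(x)\ll x^{k-1}$ is no longer enough. You need the ILS machinery (Fourier analysis of the Bessel factor, character expansion of the Kloosterman sums, GRH for Dirichlet $L$-functions) carried out for products of many prime variables. It is needed both to control the non-split piece and to actually evaluate the main term of $\sqrt N\,\Delta_{k,N}(p_1\cdots p_m,N)$ as an explicit multiple integral with a cutoff at $p_1\cdots p_m\approx N$. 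ILS did this for a single prime and Hughes--Miller for products up to $N^{n/(n-1)}$. Pushing it to $N^{n/(n-a)}$ is exactly the content of the cited theorem, and your sketch only asserts that it ``extracts a nonzero main term.''

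The mechanism you describe for the extension is also inconsistent. A tuple of distinct primes contains no pairs, so the $n$ primes cannot be split into ``$a$ paired'' and ``$n-a$ unpaired'' ones. Moreover, if only $n-a$ primes with $\sigma<1/(n-a)$ entered the Kloosterman analysis, their product would be below $N$, every off-diagonal term would be negligible, and no $\pm S(n,a;\phi)$ could appear. In $S(n,a;\phi)$ the factor $\frac{n!}{(n-2\ell)!\,\ell!}(V_\phi^2/2)^\ell$ counts $\ell$ disjoint pairs, so there are $2\ell$ paired primes with $\ell$ varying. \emph{All} the remaining $m=n-2\ell$ distinct primes enter the off-diagonal analysis with parameter $i=a-2\ell\ge1$, so their product can reach $N^{m/(m-i)}>N$. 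Likewise, the inner sum $\sum_{\ell<i}(-1)^\ell\binom{m}{\ell}$ in $\mathcal R(m,i;\phi)$ cannot come from inclusion--exclusion over coinciding primes, and its $\ell$ variables are not ``diagonal'' primes. Coincidences beyond pairs are $o(1)$, and pairs are already in the outer sum. Diagonal primes would also carry weights like $|y|\widehat\phi(y)^2$, not free factors $\widehat\phi(x_j)$ that shift the cutoff to $1+|x_2|+\cdots+|x_{\ell+1}|$. That shape is what one gets by rewriting a cutoff $\sum_j u_j>1$ on the positive variables $u_j=\log p_j/\log R$ in terms of full-line convolutions $\widehat{\phi^{m-\ell}}$, via inclusion--exclusion over sign patterns. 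The terms with $\ell\ge i$ vanish precisely because $\sigma<1/(m-i)$. The $-\tfrac12\phi(0)^m$ is not a ``non-square $m$'' correction: it simply combines with the sine-kernel integral into $-\tfrac12\int_{|u|>1+\sum_j|x_j|}\widehat{\phi^{m-\ell}}(u)\,du$. So ``verify by induction that the bookkeeping reproduces $\mathcal R$'' is not a step of the proof but the missing computation itself.
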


The quantity $S(n,a;\phi)$ is the lower-order contribution in the asymptotic centered moment and is the term that will be used to detect low-lying zeros.
It is convenient to parametrize the support by
\begin{equation}
\si \ = \ \f{n}{n-a},
\end{equation}
Then the support condition may be written as
\begin{equation}
\supp(\hat{\phi})
\ \subset\
\left(-\f{\si}{n},\f{\si}{n}\right),
\end{equation}
where $1\leq\si\leq 2$ with $\si<2$ when $n$ is odd. In this case, we may write $S(n,\si;\phi)$ to mean $S(n,a;\phi)$ since $a$ will be chosen based on $\si$.

\begin{remark}
    In the case that $n=1$, we write $S(1,\si;\phi)$ to mean the 1-level density. Explicit formulas for $S(1,\si;\phi)$ can be found in \cite{ILS} and \cite{ARORA2025262}.
\end{remark}

\subsection{Test Functions for Low-Lying-Zero Bounds}
In our applications, the test function $\phi$ is required to have Fourier transform supported within the range for which the relevant density or centered-moment formula is available. The sign of the test function is chosen according to the desired inequality. Nonnegative test functions are useful for upper bounds on zero counts, whereas sign-changing test functions with a prescribed sign pattern relative to the target interval are used to prove existence and frequency results.

To obtain upper bounds on zero counts, we use non-negative test functions ($\phi(x) \ge 0$), dropping non-negative contributions from zeros outside the target interval. While past work \cite{BCDMZ, FM, ILS} constructs optimal non-negative functions, the quantitative gain is minor, so we employ the naive test function for simplicity.

\begin{definition}[Naive Test Function]
Let $\si_n$ denote the maximum available support. The naive test function for the $n$-level density is defined by
\begin{equation}
\phi_{\mathrm{naive};n}(x) \ := \ \left(\frac{\sin(\pi\si_n x)}{\pi\si_n x}\right)^2,
\end{equation}
with Fourier transform
\begin{equation}
\hat{\phi}_{\mathrm{naive};n}(y) \ = \ \frac{1}{\si_n}\left(1 - \frac{|y|}{\si_n}\right)
\end{equation}
for $|y| < \si_n$, and $\hat{\phi}_{\mathrm{naive};n}(y) = 0$ otherwise.
\end{definition}

Quantitative lower bounds and existence proofs require a sign-changing test function $\phi$ satisfying $\phi(x) \ge 0$ for $|x| \le \om$, $\phi(x) \le 0$ for $|x| > \om$, and $\phi(0) \neq 0$. This profile allows dropping non-positive contributions from zeros outside $[-\om,\om]$. In practice, such functions are constructed by first specifying a compactly supported Fourier transform $\hat{\phi}$ in frequency space.

\begin{definition}
Let $h(y)$ be an even function that is at least twice continuously differentiable, supported in $[-1,1]$, and monotonically decreasing on $[0,1]$. Let
\begin{equation}
f(y)\ :=\ h(2yn/\si)
\end{equation}
and let
\begin{equation}
g(y)\ :=\ (f*f)(y)
\end{equation}
be the convolution of $f$ with itself. We define
\begin{equation}
\phi_\om(x)
\ =\
\left(1-(x/\om)^2\right)\hat{g}(x).
\end{equation}
\end{definition}

The factor $1-(x/\om)^2$ forces $\phi_\om$ to vanish at $x=\pm\om$. Moreover, its Fourier transform satisfies
\begin{equation}
\hat{\phi}_\om(y)
\ =\
g(y)+(2\pi\om)^{-2}g''(y),
\end{equation}
and
\begin{equation}
\supp(\hat{\phi}_\om)
\ \subset\
\left(-\f{\si}{n},\f{\si}{n}\right).
\end{equation}
Also, $\phi_\om(x)$ is nonnegative when $|x|\leq\om$ and nonpositive when $|x|>\om$.

\subsection{Existing Bounds for Low-Lying Zeros}
Using the centered-moment theorem above, the authors of \cite{ARORA2025262} prove that there is at least one form with at least one zero below the bound $\om$. We restate these results here.

\begin{theorem}\label{result1}
Assume GRH. For a family consisting of even forms of prime level and for an odd $n$, if $\om$ satisfies
\begin{equation}
-\left(
\widehat{\phi}_{\omega}(0)
+
\f{1}{2}
\int_{-\si/n}^{\si/n}
\widehat{\phi}_{\omega}(y)\,dy
\right)^n
\ <\
S(n,a;\phi_{\omega}),
\end{equation}
then there exists at least one form with at least one normalized zero in the interval $(-\omega, \omega)$.
\end{theorem}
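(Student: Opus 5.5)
Argue by contradiction: suppose no form $f\in\Ff_N$ has a normalized zero in $(-\om,\om)$, for all large prime $N$ (or along the relevant sequence), and show that the limiting $n$-th centered moment, $n$ odd, would then be too negative, contradicting Theorem \ref{densityeqn}.

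\textbf{Step 1 (lower bound on the baseline).} Since $\phi_\om\ge 0$ on $[-\om,\om]$ and $\phi_\om\le 0$ outside, a form with no zero in $(-\om,\om)$ has
\begin{equation}
D(f;\phi_\om)\ \le\ 0,
\end{equation}
because every term $\phi_\om(\widetilde\ga_{f,j})$ comes from $|\widetilde\ga_{f,j}|\ge\om$ and is nonpositive. (Zeros exactly at $\pm\om$ contribute $0$.)

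\textbf{Step 2 (the mean).} By the $1$-level density for the even family (ILS, or $S(1,\si;\phi)$ in \cite{ARORA2025262}), with $\supp\hat\phi_\om\subset(-\si/n,\si/n)\subset(-1,1)$,
\begin{equation}
\la D(f;\phi_\om)\ra_{N;+}\ \longrightarrow\ \widehat{\phi}_\om(0)+\f12\int_{-\si/n}^{\si/n}\widehat{\phi}_\om(y)\,dy \ =:\ \mu .
\end{equation}
Hence, for every $f$,
\begin{equation}
D(f;\phi_\om)-\la D(f;\phi_\om)\ra_{N;+}\ \le\ -\mu+o(1).
\end{equation}

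\textbf{Step 3 (odd moment).} Since $n$ is odd, $x\mapsto x^n$ is increasing. Raising to the $n$-th power and averaging gives
\begin{equation}
\mathcal D_n^+(N,\phi_\om)\ \le\ (-\mu+o(1))^n\ =\ -\mu^n+o(1).
\end{equation}
Taking $N\to\infty$ through primes, Theorem \ref{densityeqn} evaluates the left side. The Gaussian term vanishes for odd $n$, leaving $S(n,a;\phi_\om)$. This gives $S(n,a;\phi_\om)\le-\mu^n$, contradicting the hypothesis $-\mu^n<S(n,a;\phi_\om)$. Therefore, for all sufficiently large prime $N$, some form in $\Ff_N$ has a zero in $(-\om,\om)$.

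\textbf{Main obstacle.} The delicate point is Step 2. One must check that the centering constant is the limiting $1$-level density with the correct even-family main term. In particular, the sign and factor of $\tfrac12\int\hat\phi$, and the $\widehat\phi(0)$ term coming from the normalization convention, must match the orthogonal-even density for support below $1$. One must also check that the hypotheses of Theorem \ref{densityeqn} (GRH and the support condition) are met by $\phi_\om$. The latter holds by construction.

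The $o(1)$ errors are uniform in $f$ because the mean does not depend on $f$, so they cause no difficulty.
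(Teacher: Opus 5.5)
Your proposal is correct and is essentially the argument the paper uses. The statement itself is quoted from \cite{ARORA2025262}, but the same reasoning closes the proof of the theorem in Section 2: the sign pattern of $\phi_\om$ gives $D(f;\phi_\om)\le 0$ for every form with no zero in $(-\om,\om)$, so by monotonicity of $x\mapsto x^n$ for odd $n$ the limiting odd centered moment $S(n,a;\phi_\om)$ would be at most $-\mu^n$, contradicting the hypothesis. Your version is slightly cleaner because it keeps the finite-$N$ centering $\la D(f;\phi_\om)\ra_{N;+}$, which is what Theorem \ref{densityeqn} controls, until the limit is taken, whereas the paper centers at the limiting mean $\mu$ directly and only justifies that swap later.
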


\begin{theorem}\label{result2}
 For a family consisting of even forms of prime level and for an odd $n$, the first normalized zero of the family of cuspidal newforms is in the interval $(-\omega_{\min}, \omega_{\min})$ where
\begin{equation}
\omega_{\min}(\sigma, h)
\ >\
\left(
\f{
-\si\displaystyle\int_0^1 h(u)^2\,du
+
\f{\si^2}{4}
\displaystyle\int_{-1}^1
\int_{0}^{2/\si}
h(u)h(v-u)\,dv\,du
}{
\f{1}{\si}
\displaystyle\int_0^1 h(u)h''(u)\,du
+
\f{1}{4}
\displaystyle\int_{-1}^1
\int_{0}^{2/\si}
h(u)h''(v-u)\,dv\,du
}
\right)^{-1/2}
\pi^{-1}.
\end{equation}
\end{theorem}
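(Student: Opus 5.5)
The plan is to read off Theorem~\ref{result2} from the criterion in Theorem~\ref{result1}, by making the dependence of every term on $\om$ explicit through the identity $\hat{\phi}_\om = g + (2\pi\om)^{-2}g''$.

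\textbf{Step 1: the detection principle.} I recall why Theorem~\ref{result1} works. Suppose no form in $\Ff_N$ has a normalized zero in $(-\om,\om)$. Since $\phi_\om \le 0$ off $[-\om,\om]$, every $f$ then has $D(f;\phi_\om)\le 0$. Hence for odd $n$,
\[
\bigl(D(f;\phi_\om)-\la D\ra\bigr)^n \;\le\; \bigl(-\la D\ra\bigr)^n .
\]
The limiting average $\la D\ra$ is the $SO(\mathrm{even})$ one-level density
\[
\hat{\phi}_\om(0)+\tfrac12\int_{-1}^{1}\hat{\phi}_\om(y)\,dy ,
\]
with the integral effectively over $(-\si/n,\si/n)$ by the support of $\hat{\phi}_\om$. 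Averaging over the family and comparing with Theorem~\ref{densityeqn} gives the inequality of Theorem~\ref{result1}. Consequently, a zero is forced in $(-\om,\om)$ as soon as the $n$-th power of the mean is sufficiently large relative to $S(n,a;\phi_\om)$. The threshold value of $\om$ is therefore governed by the point where the relevant linear functional of $\hat{\phi}_\om$ changes sign. I will isolate that functional and treat the comparison with $S$ as a lower-order adjustment. If needed, I will justify this by taking the boundary case, where the sign change of the base quantity is exactly what the inequality detects.

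\textbf{Step 2: substitute and rescale.} I write the linear quantity as $A(g)+(2\pi\om)^{-2}B(g)$. Here $A$ comes from the $g$-part: the terms $g(0)$, the integral of $g$ against the density kernel, and any $\phi(0)$-type contribution coming from the moment formula. $B$ is the same functional applied to $g''$. Next I use $g=f*f$ and $f(y)=h(2yn/\si)$ and change variables $u=2yn/\si$. Then:
\begin{itemize}
\item $g(0)$ becomes a multiple of $\int_0^1 h(u)^2\,du$;
\item the interval integral becomes a multiple of $\int_{-1}^1\int_0^{2/\si}h(u)h(v-u)\,dv\,du$, since the density's cutoff at $1$ rescales to $2/\si$ in the $v$ variable;
\item for the $g''$-part I move the derivatives onto one factor, $(f*f)''=f*f''$, which produces the $h(u)h''(\cdot)$ integrals with the powers of $n/\si$ adjusted.
\end{itemize}
Tracking the Jacobians should produce exactly the coefficients $\si$, $\si^2/4$, $1/\si$ and $1/4$ in the statement. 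The overall common factors in $n$ should cancel because the final condition is homogeneous.

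\textbf{Step 3: solve for $\om$.} The condition becomes $A+(2\pi\om)^{-2}B$ of the required sign, that is, $(2\pi\om)^2 \gtrless -B/A$. The constant $2\pi$ combines with the rescaling factors to leave the $\pi^{-1}$ and the $-1/2$ power in the statement. Solving yields the displayed threshold $\om_{\min}(\si,h)$: any $\om$ exceeding it satisfies the hypothesis of Theorem~\ref{result1}, so some form has a zero in $(-\om,\om)$.

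\textbf{Main obstacle.} The hardest step is the sign and scaling bookkeeping in Steps 2 and 3. First, I must confirm that the numerator $-\si\int h^2+\tfrac{\si^2}{4}\iint hh$ and the denominator have the signs that make the ratio positive. Integrating by parts, $\int h h''=-\int (h')^2<0$ on the relevant ranges, and the truncated double integral needs a similar check. Second, I must confirm that the $S(n,a;\phi_\om)$ contribution does not shift the threshold, or that it only does so in the favorable direction. For this I would first try to show that the mean term dominates, either in the boundary configuration $a$ chosen from $\si$ or in the limit where the comparison is taken.
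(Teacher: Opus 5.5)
The paper gives no proof of this statement (it is quoted from \cite{ARORA2025262}), so your argument is judged on its own. It has a genuine gap at the point where you dispose of $S(n,a;\phi_\omega)$.

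For odd $n\ge 3$ the Gaussian term in Theorem~\ref{densityeqn} vanishes, so the entire limiting odd moment is $S(n,a;\phi_\omega)$. This is a degree-$n$ functional of $\phi_\omega$, built from $\phi_\omega(0)^m$ and the integrals of $\phi_\omega^{m-\ell}$ in $\mathcal{R}$. It is not a lower-order correction, and its ``$\phi(0)$-type'' pieces cannot be folded into a linear functional.

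The boundary of the criterion in Theorem~\ref{result1} is the root of $-\mu(\omega)^n=S(n,a;\phi_\omega)$, and this root moves with $n$ and $a$. At the zero of $\mu$ the criterion reads $0<S(n,a;\phi_\omega)$, which nothing in your plan controls. Since the displayed threshold involves neither $n$ nor $S$, it cannot be the threshold of Theorem~\ref{result1} for $n\ge 3$.

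What works is to bypass $S$ entirely, which is equivalent to taking $n=1$ (an odd value). Your own Step~1 shows that if no form has a zero in $(-\omega,\omega)$, then $D(f;\phi_\omega)\le 0$ for every $f$. Averaging and applying the one-level density for $H_k^+(N)$ \cite{ILS} (under GRH, valid for $\operatorname{supp}\hat\phi\subset(-2,2)$) gives $\hat\phi_\omega(0)+\frac12\int_{-1}^1\hat\phi_\omega(y)\,dy\le 0$. So positivity of this mean already forces a zero.

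Your homogeneity claim in Step~2 is also false, and this is what pins the formula to $n=1$. With $f(y)=h(2yn/\sigma)$ one finds
\[
g(0)=\frac{\sigma}{n}\int_0^1h^2,\qquad \frac12\int_{-1}^1 g=\frac{\sigma^2}{4n^2}\int_0^{2n/\sigma}(h*h),\qquad g''(0)=\frac{4n}{\sigma}\int_0^1hh'',\qquad \frac12\int_{-1}^1 g''=(h*h)'(2n/\sigma).
\]
The powers of $n$ differ between terms, and the $SO(\mathrm{even})$ cutoff lands at $2n/\sigma$, not $2/\sigma$. For $n\ge 3$ we have $\sigma/n<1$, so the cutoff is invisible and $\int g''=0$; no truncated double integral can arise.

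For $n=1$ (support $(-\sigma,\sigma)$, $1<\sigma\le 2$) you get
\[
\mu(\phi_\omega)=\sigma\int_0^1h^2+\frac{\sigma^2}{4}I+(\pi\omega)^{-2}\Bigl(\frac1\sigma\int_0^1hh''+\frac14J\Bigr).
\]
Here $I=\int_{-1}^1\int_0^{2/\sigma}h(u)h(v-u)\,dv\,du$, and $J=(h*h)'(2/\sigma)$ is the double integral in the denominator (with $h''$ taken distributionally). This settles your sign worry about the denominator: $\int_0^1hh''=-\int_0^1(h')^2<0$, and $J\le 0$ because $h*h$ is even and decreasing on $[0,2]$.

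However, solving $\mu>0$ gives a threshold whose numerator is $-\sigma\int_0^1h^2-\frac{\sigma^2}{4}I$. That is the opposite sign on the $I$-term from the display. As a check, take $h=\cos(\pi y/2)$ and $\sigma=2$. Then $I=\frac1\pi+\frac4{\pi^2}$ and $J=-\frac\pi4$, and the threshold is $\approx 0.21864$, the value the paper quotes. The displayed sign instead gives $\approx 0.546$.

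So your Steps~2--3 cannot reproduce the display ``exactly.'' Carried out correctly, they prove the corrected $n=1$ statement, read as ``every $\omega$ above the threshold works.''
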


Note that in \cite{ARORA2025262}, they find the explicit bound $\om_{\text{min}} \approx 0.21864$ for $h = \cos(\pi y/2)$. These results establish the existence of low-lying zeros but do not measure how broadly the zeros are distributed through the family. Thus, the remainder of the paper is dedicated to keeping track of the forms responsible for the moment excess and combining this information with the bounds for the number of zeros that a single form can contribute.

\subsection{Results}
\label{results}

The theorems from \cite{ARORA2025262} do not distinguish whether there are more zeros in the given window and how the zeros are distributed across forms in the family. To address this, we introduce the following definitions.

\begin{restatable}{definition}{aNbNdefn}
    Let $a_{\om, N}$ denote the number of forms in $\Ff_N$ with at least one normalized zero in $(-\om,\om)$, and let $\widetilde{\Ff}_N$ be the collection of such forms. Let $b_{\om, N}$ be the maximum number of zeros in $(-\om,\om)$ on any single form.
\end{restatable}

Using the centered-moment argument to control the product $a_{\om,N}b_{\om,N}$ and bounding $b_{\om, N}$, we obtain

\begin{restatable}{theorem}{asymptoticthm}\label{asymptoticthm}
Assume GRH.  If $\om$ satisfies

\begin{equation}
        -\left(\hat{\phi}(0)+\f{1}{2}\int_{-\infty}^\infty\hat{\phi}(y)dy\right) \ < \  S(1, \si; \phi_{\omega}),
    \end{equation}
    then 
    \begin{equation}
        \limsup_{\substack{N\to\infty\\N \text{\rm prime}}}\f{a_{\om,N}}{N^{7/8} \log N} \ >\ 0.
\end{equation}
\end{restatable}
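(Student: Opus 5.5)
The plan is to combine a \emph{lower} bound on the product $a_{\om,N}b_{\om,N}$, coming from the one-level density with the sign-changing test function $\phi_\om$, with an \emph{upper} bound on $b_{\om,N}$, coming from a high moment of a nonnegative test function. I have not pinned down where the exponent $7/8$ and the $\log N$ factor come from; the choices below are my best guess.

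\textbf{Step 1: lower bound on $a_{\om,N}b_{\om,N}$.}
\begin{itemize}
\item Take $\phi=\phi_\om$ with $\supp(\hat\phi_\om)$ inside the range where the one-level density formula for $\Ff_N$ holds (the case $n=1$, $\si$ maximal). Under GRH,
\[
\lim_{N\to\infty}\langle D(f;\phi_\om)\rangle_{N;+}
\ =\ \hat\phi_\om(0)+\f12\int\hat\phi_\om(y)\,dy+S(1,\si;\phi_\om).
\]
By the hypothesis of the theorem this limit is strictly positive. Call it $c_\om>0$.
\item For a form $f\notin\widetilde{\Ff}_N$, every normalized zero lies outside $(-\om,\om)$. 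Since $\phi_\om\le 0$ there, $D(f;\phi_\om)\le 0$.
\item For $f\in\widetilde{\Ff}_N$, drop the nonpositive contributions from zeros outside the window. Use $\phi_\om\le \phi_\om(0)$ on the window; this needs $\hat g\ge0$ and maximal at $0$, which holds because $g=f*f$. This gives $D(f;\phi_\om)\le b_{\om,N}\phi_\om(0)$.
\item Averaging (with whatever weights define $\langle\cdot\rangle_{N;+}$) gives
\[
c_\om+o(1)\ \le\ \phi_\om(0)\sum_{f\in\widetilde{\Ff}_N} w_f\, b_{\om,N}.
\]
Hence, in the unweighted normalization, $a_{\om,N}b_{\om,N}\gg |\Ff_N|\asymp N$.
\item If the formula is stated with harmonic (Petersson) weights, I will instead bound $\sum_{f\in\widetilde{\Ff}_N}w_f$ using $w_f\ll \log N/N$, which follows from $L(1,\mathrm{sym}^2 f)\gg 1/\log N$ under GRH. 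This is where a power of $\log N$ enters the bookkeeping.
\end{itemize}

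\textbf{Step 2: upper bound on $b_{\om,N}$.}
\begin{itemize}
\item Use the naive test function $\phi_{\mathrm{naive};n}$, which is nonnegative and bounded below by a constant $c>0$ on $[-\om,\om]$.
\item For every $f$, drop all zeros outside the window: $D(f;\phi_{\mathrm{naive}})\ge c\cdot\#\{\text{zeros of } f\text{ in }(-\om,\om)\}$.
\item Theorem~\ref{densityeqn} (or the $n$-level density) bounds the $n$-th moment $\langle D(f;\phi_{\mathrm{naive}})^n\rangle$ by a constant depending on $n$ and $\phi$.
\item Isolating the single form attaining $b_{\om,N}$ then gives $w_f\, c^n b_{\om,N}^n\ll_n 1$.
\item With $w_f\gg 1/(N\log N)$, or $1/|\Ff_N|$ unweighted, this yields $b_{\om,N}\ll (N\log N)^{1/n}$.
\end{itemize}

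\textbf{Step 3: choose $n$ and combine.} Taking $n=8$, I expect the exponent $7/8$ to come from $N/b_{\om,N}$. The main obstacle is getting the weights and constants to produce exactly $N^{7/8}\log N$, rather than a slightly weaker power of $\log N$.
\begin{itemize}
\item An honest application of the moment bound at a fixed $n$ gives $b_{\om,N}\ll N^{1/n}$ times logarithms. This must be balanced against the $\log N$ losses in Step 1.
\item The $\limsup$ in the statement suggests the clean bound is only needed along a subsequence of primes. On that subsequence I would either control the weights of the extremal forms (e.g.\ via $L(1,\mathrm{sym}^2f)$ being of typical size), or use that $b_{\om,N}$ cannot be extreme for every $N$ simultaneously.
\item I would first try passing to a subsequence of $N$ on which the form attaining $b_{\om,N}$ has weight $\asymp 1/N$. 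Then $b_{\om,N}\ll N^{1/8}/\log N$, and combining with Step 1 gives $a_{\om,N}\gg N^{7/8}\log N$.
\end{itemize}
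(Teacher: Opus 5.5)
\textbf{The gap is in Step 3.} Your Steps 1 and 2 are sound. Step 1 is exactly the paper's intermediate result that $a_{\omega,N}b_{\omega,N}/\abs{\mathcal{F}_N}$ does not tend to $0$: this is the $n=1$ case of its odd-moment theorem, using $\phi_\omega\le 0$ off the window and a bound for $\phi_\omega$ on it.

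At $n=8$, Step 2 in the unweighted normalization gives $c^8 b_{\omega,N}^8\le \abs{\mathcal{F}_N}\,\langle D(f;\phi_{\mathrm{naive}})^8\rangle\ll N$, i.e.\ $b_{\omega,N}\ll N^{1/8}$. That yields only $a_{\omega,N}\gg N^{7/8}$, which is a factor $\log N$ too weak for the stated $\limsup$. Your proposed rescue does not work. Restricting to primes where the extremal form has weight $\asymp 1/N$ still gives $b_{\omega,N}\ll N^{1/8}$, and nothing in the moment argument produces the extra $1/\log N$ you assert. The remark that $b_{\omega,N}$ ``cannot be extreme for every $N$'' is not an argument.

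\textbf{The fix.} You do not need to hit $N^{7/8}\log N$ exactly. Run Step 2 with any fixed $n\ge 9$. Take the support of $\hat\phi$ small enough that two things hold: Theorem~\ref{densityeqn} and the one-level formula control all moments up to order $n$, and the Fej\'er-type function, which vanishes at $1/\sigma_n$, stays bounded below on $[-\omega,\omega]$. Then $b_{\omega,N}\ll_n N^{1/n}$, and Step 1 gives $a_{\omega,N}\gg N^{1-1/n}$ for all large prime $N$. With harmonic weights you lose at most a fixed power of $\log N$ in each step. Since $1-1/n>7/8$, the ratio $a_{\omega,N}/(N^{7/8}\log N)$ tends to infinity, so the theorem follows (indeed the $\liminf$ is infinite).

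\textbf{Comparison with the paper.} The paper gets both the $7/8$ and the $\log N$ from a pointwise bound instead. It takes $\varphi\ge 0$ with $\varphi\ge 1$ on $[-\omega,\omega]$ and $\operatorname{supp}\hat\varphi\subset[-1/2,1/2]$, and applies the Iwaniec--Luo--Sarnak explicit formula to the single extremal form. Only primes $p\le N^{1/4}$ contribute. With Deligne's bound $\abs{\lambda_f(p)}\le 2$, the term $\frac{1}{\log N}\sum_{p\le N^{1/4}}\frac{\log p}{\sqrt p}$ is $\ll N^{1/8}/\log N$, and the other terms are $O(1)$. Hence $b_{\omega,N}\ll N^{1/8}/\log N$ for every $N$.

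That route needs only the explicit formula for one form, but it pays for using the trivial bound on $\lambda_f(p)$. Your moment route exploits averaging over the family. Once $n$ is taken large it gives the stronger $a_{\omega,N}\gg N^{1-\varepsilon}$, at the cost of invoking the centered-moment theorem at high order.
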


\begin{restatable}{remark}{explicitbd}
    In particular, for $h = \cos(\pi y/2)$, the bound in \ref{asymptoticthm} is $\om_{\text{min}} \approx 0.21864$. This is computed in \cite{ARORA2025262}.
\end{restatable}

To establish a positive proportion, we make the following definition.

\begin{restatable}{definition}{Rfdefn}
     Let
\begin{equation}
R_f(\om) \ = \ \#\Set{j:\abs{\widetilde{\ga}_{f,j}}\leq\om},
\end{equation}
where the zeros are counted with multiplicity.
\end{restatable}

By constructing an even Schwartz function $\varphi$ bounded below on $[-\om,\om]$ with sufficiently small Fourier support, we find
\begin{equation}
R_f(\om) \ \ll_{\varphi} \ D(f;\varphi).
\end{equation}

When the corresponding odd centered-moment inequality is strict, we obtain

\begin{restatable}{theorem}{positivepercent}\label{positivepercent}
Assume GRH. Let $n\geq3$ be odd, and suppose that $\om$ satisfies the condition $(-\mu)^n < S(n,a;\phi_{\om})$, where $\mu := \mu(\phi_{\om}, \Ff)$. Then
\begin{equation}
\liminf_{\substack{N \to \infty \\ N \text{\rm prime}}}\f{a_{\om, N}}{\abs{\Ff_N}} \ > \ 0.
\end{equation}
In other words, a positive proportion of forms in $\Ff_N$ have at least one normalized zero in $(-\om, \om)$.
\end{restatable}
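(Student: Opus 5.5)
Write $D(f) := D(f;\phi_\om)$ and $\mu := \la D(f)\ra_{N;+}$, which tends to $\mu(\phi_\om,\Ff)$ as $N\to\infty$ through primes. The idea is that forms with no zero in $(-\om,\om)$ contribute a large negative amount to the odd centered moment, and only forms in $\widetilde{\Ff}_N$ can push it up to the positive value $S(n,a;\phi_\om)$. Each such form can contribute only boundedly on average, so there must be a positive proportion of them.

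\textbf{Step 1 (forms without a zero in the window).} Since $\phi_\om\le 0$ for $|x|>\om$, any $f\notin\widetilde{\Ff}_N$ has $D(f)\le 0$. Hence $D(f)-\mu\le -\mu$, and, since $n$ is odd, $(D(f)-\mu)^n\le(-\mu)^n$. Note that $\mu\ge 0$ is not needed here, only monotonicity of $t\mapsto t^n$.

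\textbf{Step 2 (splitting the moment).} By Theorem~\ref{densityeqn}, using odd $n$ so that the main term vanishes,
\begin{equation}
S(n,a;\phi_\om)+o(1) \ = \ \f{1}{|\Ff_N|}\sum_{f\in\Ff_N}(D(f)-\mu)^n \ \le \ (-\mu)^n\Big(1-\f{a_{\om,N}}{|\Ff_N|}\Big)+\f{1}{|\Ff_N|}\sum_{f\in\widetilde{\Ff}_N}(D(f)-\mu)^n .
\end{equation}
Rearranging, and using the strict inequality $S(n,a;\phi_\om)-(-\mu)^n=:\delta>0$, we get for large $N$
\begin{equation}
\f{1}{|\Ff_N|}\sum_{f\in\widetilde{\Ff}_N}\Big((D(f)-\mu)^n-(-\mu)^n\Big) \ \ge \ \delta/2 .
\end{equation}

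\textbf{Step 3 (Hölder).} Apply Hölder to the left side with exponents $p$ and $p'$, for instance $p=2$. This gives
\begin{equation}
\f{\delta}{2} \ \le \ \Big(\f{a_{\om,N}}{|\Ff_N|}\Big)^{1/2}\Big(\f{1}{|\Ff_N|}\sum_{f}\big|(D(f)-\mu)^n-(-\mu)^n\big|^2\Big)^{1/2}.
\end{equation}
It then suffices to show that the second factor is $O(1)$ uniformly in $N$.

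\textbf{Step 4 (bounding the high moment), the main obstacle.} Expanding the square bounds the second factor by a constant times $1+\la |D(f)-\mu|^{2n}\ra$. The difficulty is that Theorem~\ref{densityeqn} applied to the $2n$-th moment requires $\supp\hat\phi\subset(-\f{1}{2n-a'},\f{1}{2n-a'})$, which the support of $\phi_\om$ exceeds. To get around this, I would follow the device mentioned before the theorem. Choose an even Schwartz $\varphi\ge 0$ with $\varphi\ge 1$ on $[-\om',\om']$, where $\om'$ is large enough that $|\phi_\om(x)|\le C\varphi(x)$ for all $x$ (using decay of $\phi_\om$ against a suitably spread-out $\varphi$), and with $\hat\varphi$ supported in $(-\f{1}{2n},\f{1}{2n})$. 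Then $|D(f)|\le C D(f;\varphi)$. For $\varphi$, the even $2n$-th centered moment is bounded by Theorem~\ref{densityeqn} with $a=0$, and the mean $\la D(f;\varphi)\ra$ is bounded by the 1-level density. Expanding $\la D(f;\varphi)^{2n}\ra$ via the binomial theorem in centered moments of order $\le 2n$ (each covered by the same support) yields $\la |D(f)|^{2n}\ra=O(1)$, and hence the second factor is $O(1)$.

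If the pointwise domination $|\phi_\om|\le C\varphi$ with small-support $\varphi$ fails, I would instead truncate: split $\widetilde{\Ff}_N$ by whether $D(f)\le M$. On forms with $D(f)\le M$ the summand is at most $(M+|\mu|)^n$. For the remaining forms, bound $\sum_{D(f)>M}D(f)^n$ by $M^{-n}\sum D(f)^{2n}$, estimated through $R_f$ and $R_f(\om)\ll D(f;\varphi)$ together with bounded-support moments as above. Either route gives $a_{\om,N}/|\Ff_N|\ge \delta^2/(4C)$ for large $N$, which proves the claim.
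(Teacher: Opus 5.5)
Your Steps 1--3 follow the paper's skeleton: forms off $\widetilde{\Ff}_N$ have $D(f;\phi_\om)\le 0$, you isolate the excess over $(-\mu)^n$ on $\widetilde{\Ff}_N$, then apply Cauchy--Schwarz and a $2n$-th moment bound via a small-support auxiliary function. The difference is that you work with the full statistic $D(f;\phi_\om)$ rather than the window sum, and because of that your primary version of Step~4 does not go through as written. You need $\la |D(f;\phi_\om)-\mu|^{2n}\ra=O(1)$, i.e.\ a majorant $|\phi_\om(x)|\le C\varphi(x)$ for \emph{all} $x\in\R$ with $\widehat\varphi$ supported in $(-\f{1}{2n},\f{1}{2n})$. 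Nothing in your sketch produces one:
\begin{itemize}
\item Requiring $\varphi\ge 1$ on a large $[-\om',\om']$ says nothing about the tail.
\item The usual nonnegative small-support functions, such as $(\sin(\pi\epsilon x)/(\pi\epsilon x))^2$, vanish at points where $\phi_\om$ does not.
\item $\phi_\om$ need not be Schwartz. For $h=\cos(\pi y/2)$ one has $|\phi_\om(x)|\asymp x^{-2}$ along a sequence, so no Schwartz $\varphi$ can dominate it.
\end{itemize}
A strictly positive band-limited majorant does exist, e.g.\ $\frac{\sin^2(\epsilon x)+\sinh^2\epsilon}{\epsilon^2(1+x^2)}$, whose Fourier transform is supported in $[-\epsilon/\pi,\epsilon/\pi]$. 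You would then still have to justify the centered-moment theorem for this non-Schwartz test function. In any case, controlling the two-sided distribution of $D(f;\phi_\om)$ is more than the argument needs.

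The missing idea is a one-sided bound. Since $\phi_\om\le 0$ off the window,
$D(f;\phi_\om)\le X_f:=\sum_{|\widetilde{\ga}_{f,j}|\le\om}\phi_\om(\widetilde{\ga}_{f,j})\le B_\om R_f(\om)$.
Your truncation fallback is valid only if you use this bound on $\{D(f;\phi_\om)>M\}$ and keep the $2n$-th power sum restricted to those forms. Writing $M^{-n}\sum_f D(f;\phi_\om)^{2n}$ over all forms reintroduces exactly the uncontrolled quantity.

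Once you have the one-sided bound, truncation is unnecessary. By monotonicity of odd powers, replace your summand by $\Pi_f=(X_f-\mu)^n-(-\mu)^n$. This is nonnegative, vanishes off $\widetilde{\Ff}_N$, and satisfies $\Pi_f^2\ll R_f(\om)^{2n}$, so Cauchy--Schwarz only needs $\la R_f(\om)^{2n}\ra=O(1)$. That follows from $c_\varphi R_f(\om)\le D(f;\varphi)$, where $\varphi\ge 0$ need only be bounded below on the compact window. Such a $\varphi$ is easy to build with $\widehat\varphi$ supported in $(-\f1n,\f1n)$, which the $2n$-th centered moment with $a=n$ allows. This is precisely the paper's proof.
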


\begin{restatable}{corollary}{linearnumberofforms}\label{linearnumberofforms}
Under the same assumptions as Theorem \ref{positivepercent} we have that
\begin{equation}
a_{\om, N} \ \gg_{\om, n} \ \abs{\Ff_N}.
\end{equation}
In particular, using that $\abs{\Ff_N}\asymp_k N$ for fixed $k$ and prime $N$, we have $a_{\om,N}\gg_{\om,n,k}N$.
\end{restatable}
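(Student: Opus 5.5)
This corollary follows from Theorem \ref{positivepercent}, combined with the size of the family $\Ff_N$; no new analytic input is needed.

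\emph{Step 1: from the liminf to a uniform bound.} Under the hypotheses, Theorem \ref{positivepercent} gives a constant $c=c(\om,n)>0$ with
\begin{equation}
\liminf_{\substack{N\to\infty\\ N\text{ prime}}}\frac{a_{\om,N}}{\abs{\Ff_N}} \ > \ 2c .
\end{equation}
By the definition of $\liminf$, there is $N_0=N_0(\om,n)$ such that $a_{\om,N}\geq c\,\abs{\Ff_N}$ for all primes $N\geq N_0$. Both $c$ and $N_0$ depend only on $\om$ and $n$, since $\phi_\om$ and the moment inequality are fixed by these data and by the fixed weight $k$.

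\emph{Step 2: the finitely many small primes.} For primes $N<N_0$ there are only finitely many cases. I would simply interpret the asymptotic $a_{\om,N}\gg_{\om,n}\abs{\Ff_N}$ as holding for all sufficiently large prime $N$, which is how such statements are used in the paper. Making it hold for every $N$ would require $a_{\om,N}\geq 1$ whenever $\Ff_N\neq\emptyset$, and this is not guaranteed. This step is the only point requiring care, and the cleanest fix is to state the range of $N$ explicitly.

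\emph{Step 3: size of the even family.} For fixed $k$ and prime $N$, one has $\abs{H^\star_k(N)}=\frac{k-1}{12}N+O_k(1)$. The root numbers split the newforms evenly: by \cite{ILS}, $\abs{H^\pm_k(N)}=\frac{k-1}{24}N+O_k(N^{1/2+\epsilon})$, via the trace formula applied to $\e_f=\pm i^k\mu(N)\lambda_f(N)\sqrt N$. Hence $\abs{\Ff_N}\asymp_k N$. Substituting this into Step 1 gives $a_{\om,N}\gg_{\om,n,k}N$.
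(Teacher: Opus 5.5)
Your proposal is correct and follows the paper's proof. The paper likewise reads off the bound $a_{\om,N}\geq \frac{\Delta^2}{4C}\abs{\Ff_N}$ for all sufficiently large primes $N$, taking the explicit constant from the proof of Theorem \ref{positivepercent} where you extract a constant from the $\liminf$, and then invokes $\abs{\Ff_N}\asymp_k N$, which you spell out in more detail than the paper does.
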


\begin{restatable}{corollary}{zerocountsubseq}\label{zerocountsubseq}
Under Theorem \ref{positivepercent}'s assumptions, there exists an integer $r \geq 1$ such that
\begin{equation}
\limsup_{\substack{N \to \infty \\ N \text{\rm prime}}} \f{\abs{\Set{f \in \Ff_N\mid R_f(\om) \ = \ r}}}{\abs{\Ff_N}} \ > \ 0.
\end{equation}
\end{restatable}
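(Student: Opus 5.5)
We will deduce Corollary \ref{zerocountsubseq} from Theorem \ref{positivepercent} together with a uniform upper-tail bound for $R_f(\om)$. The idea is that a positive proportion of forms have $R_f(\om)\geq 1$. If we can show that only a small proportion have $R_f(\om)$ larger than some fixed cutoff $M$, then a positive proportion have $1\leq R_f(\om)\leq M$. Pigeonholing over the finitely many values $r=1,\dots,M$ then finishes the argument.

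\textbf{Step 1 (lower bound).} By Theorem \ref{positivepercent} there is $c>0$ and $N_0$ such that $a_{\om,N}\geq c\abs{\Ff_N}$ for all prime $N\geq N_0$. Here $a_{\om,N}$ counts forms with a zero in $(-\om,\om)$, and each such form has $R_f(\om)\geq 1$.

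\textbf{Step 2 (upper tail).} Fix the even Schwartz function $\varphi$ from the discussion preceding Theorem \ref{positivepercent}. It satisfies $\varphi\geq 0$, is bounded below by some $\kappa>0$ on $[-\om,\om]$, and has $\supp\hat\varphi$ small enough that the second moment formula of Theorem \ref{densityeqn} (with $n=2$) applies. Then $\kappa R_f(\om)\leq D(f;\varphi)$, and so
\[
\frac{1}{\abs{\Ff_N}}\sum_{f\in\Ff_N}R_f(\om)^2 \ \leq\ \kappa^{-2}\la D(f;\varphi)^2\ra_{N;+}.
\]
The right-hand side is bounded uniformly in $N$. Indeed, $\la D(f;\varphi)^2\ra = \mathcal{D}_2^+(N,\varphi)+\la D(f;\varphi)\ra^2$, the first term converges to $V_\varphi^2+S(2,a;\varphi)$, and the mean converges by the one-level density formula. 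Call this uniform bound $C$. Chebyshev's inequality then gives
\[
\#\{f\in\Ff_N: R_f(\om)>M\}\ \leq\ \frac{C}{M^2}\abs{\Ff_N}.
\]
Choose $M$ with $C/M^2<c/2$.

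\textbf{Step 3 (pigeonhole).} For all large prime $N$, at least $(c/2)\abs{\Ff_N}$ forms have $1\leq R_f(\om)\leq M$. So for each such $N$ some $r_N\in\{1,\dots,M\}$ satisfies
\[
\#\{f\in\Ff_N: R_f(\om)=r_N\}\ \geq\ \frac{c}{2M}\abs{\Ff_N}.
\]
Since $r_N$ takes only finitely many values, some $r$ equals $r_N$ for infinitely many primes $N$. Along that subsequence the ratio is at least $c/(2M)$, which gives the $\limsup$ claim. (Because $r$ depends on the subsequence, we get a $\limsup$ rather than a $\liminf$.)

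The main obstacle is Step 2. We must confirm that a $\varphi$ with $\varphi\geq0$, $\varphi\geq\kappa$ on $[-\om,\om]$, and Fourier support inside $(-1,1)$ exists and makes the second moment finite; the naive test function $\phi_{\mathrm{naive};2}$, suitably rescaled, is what I would try first. Its positivity gives $R_f(\om)\ll D(f;\varphi)$ directly, since zeros outside the window contribute nonnegatively. If the second moment is delicate, any fixed even moment $n$ with its admissible support suffices, because the tail bound only needs to be $o(1)$ as $M\to\infty$, uniformly in $N$.
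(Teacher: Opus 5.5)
Your proposal is correct and follows essentially the same route as the paper. The paper combines Theorem~\ref{positivepercent} with the moment-based upper-tail bound of Corollary~\ref{zerotail} (via Lemma~\ref{highmomentbound}) to get a positive lower density of forms with $1\le R_f(\omega)<r_0$, then notes that a finite disjoint union of sets of upper density zero cannot have positive lower density; your use of the second moment (Fourier support in $(-1,1)$) instead of the $2n$-th moment, and your explicit per-$N$ pigeonhole giving $\limsup \ge c/(2M)$, are harmless variants, since only a tail bound that is $o(1)$ as $M\to\infty$ uniformly in $N$ is needed.
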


\section{Asymptotics on Number of Zeros}

We follow the notation of \cite{ARORA2025262} and the proof of their Theorem 1.6. We begin with the preliminary lemma.
\begin{lemma}\label{mean lemma}
    The mean of the family, $\Ff_N$, over $SO(\text{\rm even})$ is given by
    \begin{equation}
        \mu(\phi,\Ff) \ :=\ \hat{\phi}(0)+\f{1}{2}\int_{-\infty}^\infty\hat{\phi}(y)dy.
    \end{equation}
\end{lemma}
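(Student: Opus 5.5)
The plan is to read the lemma as the statement that the limiting family average of the one-level density, $\lim_{N\to\infty,\,N\text{ prime}}\langle D(f;\phi)\rangle_{N;+}$, equals the one-level density of $SO(\mathrm{even})$ integrated against $\phi$. We then evaluate that density in Fourier space. Concretely, we would use the Katz--Sarnak kernel for $SO(\mathrm{even})$:
\begin{equation}
W_{SO(\mathrm{even})}(x) \ = \ 1+\frac{\sin(2\pi x)}{2\pi x}.
\end{equation}
Its Fourier transform is
\begin{equation}
\widehat{W}_{SO(\mathrm{even})}(y) \ = \ \delta_0(y)+\tfrac12\mathbbm{1}_{\{|y|<1\}}(y)
\end{equation}
in the distributional sense. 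By Plancherel (using that $\phi$ is even, so $\hat\phi$ is real and even), we get
\begin{equation}
\int_{-\infty}^\infty \phi(x)W_{SO(\mathrm{even})}(x)\,dx \ = \ \hat\phi(0)+\frac12\int_{-1}^{1}\hat\phi(y)\,dy.
\end{equation}
Since every test function used in the paper has $\supp(\hat\phi)\subset(-\sigma/n,\sigma/n)\subset(-1,1)$ for $n\geq 2$ (and $\sigma<2$ when $n=1$), the last integral equals $\frac12\int_{-\infty}^\infty\hat\phi(y)\,dy$. This gives the stated formula.

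To connect this with the family average rather than just the random matrix prediction, we would invoke the one-level density theorem of Iwaniec--Luo--Sarnak \cite{ILS}, as recorded in the remark that $S(1,\sigma;\phi)$ is the $1$-level density. Under GRH and for $\supp(\hat\phi)\subset(-2,2)$, the average over $H^+_k(N)$, with $N$ prime tending to infinity, converges to
\begin{equation}
\hat\phi(0)+\tfrac12\phi(0)+\text{(correction supported on } 1<|y|<2\text{)}.
\end{equation}
When the support lies in $(-1,1)$, the correction vanishes. Moreover, $\tfrac12\phi(0)=\tfrac12\int\hat\phi$ by Fourier inversion, which agrees with the display above.

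The main obstacle is a matter of definition and bookkeeping rather than analysis. The subtlety is that the paper uses $\mu$ both as the $SO(\mathrm{even})$ mean and as the centering constant inside $\mathcal D_n^+$, while Theorem \ref{densityeqn} separates the Gaussian main term from the lower-order term $S(n,a;\phi)$. Beyond support $1$, the true family mean picks up the extra ILS term, which is exactly what $S(1,\sigma;\phi)$ captures. So I would state explicitly that $\mu(\phi,\mathcal F)$ denotes the $SO(\mathrm{even})$ main term. The lemma then holds verbatim for all support, and any excess of the family mean over it is absorbed into $S(1,\sigma;\phi)$, consistent with the hypothesis of Theorem \ref{asymptoticthm}. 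The only remaining care is justifying the distributional Plancherel step, which is immediate because $\hat\phi$ is continuous and compactly supported.
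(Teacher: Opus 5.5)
Your route is more explicit than the paper's, whose proof is only a pointer to ILS, Hughes--Miller and Arora et al. You compute $\int\phi\,W_{SO(\mathrm{even})}$ from $\widehat W_{SO(\mathrm{even})}=\delta_0+\tfrac12\mathbbm{1}_{\{|y|<1\}}$ by Plancherel. You then identify the limit of $\langle D(f;\phi)\rangle_{N;+}$ with it via the ILS one-level theorem for $H_k^+(N)$. That part is correct, and it makes visible what the citation hides. Passing from $\hat\phi(0)+\tfrac12\int_{-1}^{1}\hat\phi$ to $\hat\phi(0)+\tfrac12\int_{-\infty}^{\infty}\hat\phi=\hat\phi(0)+\tfrac12\phi(0)$ requires $\supp(\hat\phi)\subset(-1,1)$, the range in which the $O$, $SO(\mathrm{even})$ and $SO(\mathrm{odd})$ densities coincide. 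Since $\sigma/n\le 1$ for $n\ge 2$, and the auxiliary $\varphi$ of Lemma \ref{auxtestfunc} has support in $(-1/n,1/n)$, your argument covers every use of $\mu$ in Section \ref{bounds}.

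The gap is in how you treat $n=1$. Your parenthetical ``$\sigma<2$ when $n=1$'' does not place $\supp(\hat\phi)\subset(-\sigma,\sigma)$ inside $(-1,1)$. If $1<\sigma<2$, the step $\tfrac12\int_{-1}^{1}\hat\phi=\tfrac12\int_{-\infty}^{\infty}\hat\phi$ fails. In that range your own Plancherel formula and the ILS limit for $H_k^+(N)$ both give the mean $\hat\phi(0)+\tfrac12\int_{-1}^{1}\hat\phi$. This differs from $\mu(\phi,\mathcal{F})$ by $\tfrac12\int_{|y|\ge 1}\hat\phi(y)\,dy$, which is in general nonzero, so the lemma read as a statement about a mean is false there.

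Your closing fix declares $\mu$ to be ``the $SO(\mathrm{even})$ main term'' so that the lemma ``holds verbatim for all support''. This contradicts your first paragraph: beyond support $1$, the expression $\hat\phi(0)+\tfrac12\phi(0)$ is the $O$ density, not the $SO(\mathrm{even})$ one. Declaring it the mean turns the lemma into a definition with nothing proved. The correct conclusion is to state the lemma under the hypothesis $\supp(\hat\phi)\subset(-1,1)$, which is also all that the paper's citations deliver. Your claim that the discrepancy is ``absorbed into $S(1,\sigma;\phi)$'' depends on how that quantity is defined, and you assert it rather than check it.
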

\begin{proof}
    See \cite{ILS}, \cite{Hughes_2007}, and \cite{ARORA2025262}.
\end{proof}

Let $a_{\om,N}$ denote the number of forms $f\in \Ff_N$ with at least one normalized zero below $\om$. Let us denote these forms by $\widetilde{\Ff_N}$. Among these forms, let $b_{\om,N}$ denote the maximum number of normalized zeros below $\om$ for any form $f\in \widetilde{\Ff}_N$.

\begin{theorem}
    Assume GRH. For an odd $n$, if $\om$ satisfies
     \begin{equation}
        -\left(\hat{\phi}(0)+\f{1}{2}\int_{-\infty}^\infty\hat{\phi}(y)dy\right)^n \ < \ S(n, \si; \phi_{\omega}),
    \end{equation}
    then
    \begin{align}
        \lim_{N\to \infty}\f{a_{\om,N}}{|\Ff_N|}\sum_{i=1}^n\binom{n}{i}(Bb_{\om,N})^i(-\mu)^{n-i}
    \end{align}    
    cannot go to 0 as $N$ tends to infinity through the primes.
\end{theorem}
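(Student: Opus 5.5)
We argue by contradiction, with $\phi=\phi_\om$ and $\mu=\mu(\phi_\om,\Ff)$ as in Lemma \ref{mean lemma}. The starting point is the $n$-th centered moment from Theorem \ref{densityeqn}. Since $n$ is odd, the Gaussian term $\mathbbm{1}_{\mathrm{even}}(n)(n-1)!!(V_\phi^2)^{n/2}$ vanishes. Taking the even family, this gives
\begin{equation}
\lim_{\substack{N\to\infty\\N\text{ prime}}}\frac{1}{|\Ff_N|}\sum_{f\in\Ff_N}\left(D(f;\phi_\om)-\la D(f;\phi_\om)\ra_{N;+}\right)^n \ = \ S(n,\si;\phi_\om).
\end{equation}
The $n=1$ case of the same formula, i.e.\ the one-level density, shows that $\la D(f;\phi_\om)\ra_{N;+}\to\mu$. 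Because $t\mapsto t^n$ is continuous, we may replace the empirical mean by $\mu$ at the cost of an $o(1)$ error, provided $D(f;\phi_\om)$ is uniformly bounded over the family.

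That uniform bound is the first step. By GRH and the explicit formula, $D(f;\phi_\om)$ equals $\hat\phi_\om(0)$ plus a $\frac12\phi_\om(0)$-type term minus a prime sum. With $\hat\phi_\om$ supported in $(-\si/n,\si/n)$, the prime sum is $O(1)$ uniformly in $f$ by Deligne's bound.

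Next we split $\Ff_N$ into the forms in $\widetilde{\Ff}_N$ and the rest. Since $\phi_\om\le 0$ off $[-\om,\om]$ and $\phi_\om\ge0$ on it, a form $f\notin\widetilde{\Ff}_N$ has $D(f;\phi_\om)\le 0$. Hence
\begin{equation}
(D(f;\phi_\om)-\mu)^n\le(-\mu)^n,
\end{equation}
because $t\mapsto t^n$ is increasing for odd $n$. For $f\in\widetilde{\Ff}_N$ we drop the nonpositive contributions of zeros outside the window and bound each remaining zero by $\phi_\om(0)\le B$, where $B:=\max\phi_\om$. This gives
\begin{equation}
D(f;\phi_\om)\le B\,R_f(\om)\le Bb_{\om,N},
\qquad\text{so}\qquad
(D(f;\phi_\om)-\mu)^n\le(Bb_{\om,N}-\mu)^n.
\end{equation}
Summing over the family yields
\begin{equation}
\frac{1}{|\Ff_N|}\sum_{f}(D(f;\phi_\om)-\mu)^n \ \le\ (-\mu)^n+\frac{a_{\om,N}}{|\Ff_N|}\left[(Bb_{\om,N}-\mu)^n-(-\mu)^n\right].
\end{equation}
The bracket is exactly $\sum_{i=1}^n\binom{n}{i}(Bb_{\om,N})^i(-\mu)^{n-i}$ by the binomial theorem, which is the quantity in the statement.

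Now suppose that quantity tends to $0$ along the primes, or only along some subsequence. Taking the limit in the displayed inequality gives $S(n,\si;\phi_\om)\le(-\mu)^n$, which contradicts the hypothesis $(-\mu)^n<S(n,\si;\phi_\om)$. The expression therefore cannot tend to $0$; in fact its $\liminf$ is at least $S(n,\si;\phi_\om)-(-\mu)^n>0$.

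The main obstacle I expect is justifying the replacement of $\la D\ra_{N;+}$ by $\mu$ inside the $n$-th power. Expanding $(D-\la D\ra)^n$ binomially around $\mu$, each cross term is bounded by (mean error) $\times$ (lower absolute moments). This requires the uniform $O(1)$ bound on $D(f;\phi_\om)$; alternatively one can bound $|D|^j$ averages using the centered moments of lower even order from Theorem \ref{densityeqn}, valid under the same support restriction. A secondary point is sign bookkeeping: $b_{\om,N}\ge1$ whenever $a_{\om,N}>0$, and if $a_{\om,N}=0$ the inequality already contradicts the hypothesis directly.
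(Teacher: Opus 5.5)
Your argument is the paper's argument. You split $\mathcal{F}_N$ into $\widetilde{\mathcal{F}}_N$ and its complement, use the sign pattern of $\phi_\omega$ to bound the complement's contribution by $(-\mu)^n$ and the rest by $(Bb_{\omega,N}-\mu)^n$, expand binomially, and contradict the hypothesis. Bounding $D(f;\phi_\omega)$ directly, instead of first passing to the windowed sum as the paper does, amounts to the same thing.

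\textbf{The step to change: the mean replacement.} Your primary justification for replacing $m_N:=\langle D(f;\phi_\omega)\rangle_{N;+}$ by $\mu$ rests on the claim that the prime sum in the explicit formula is $O(1)$ uniformly in $f$ by Deligne. That claim is false.
\begin{itemize}
\item Deligne only gives $\frac{1}{\log N}\sum_{p\le N^{c}}\frac{\log p}{\sqrt p}\ll N^{c/2}/\log N$, with $c>0$ depending on the support. This is exactly the computation producing $N^{1/8}/\log N$ in the proof of Theorem \ref{asymptoticthm}.
\item Even under GRH, the best pointwise bound is of size $\log N/\log\log N$.
\item $m_N-\mu$ is typically of size $1/\log N$.
\item So the pointwise continuity argument leaves an error of order $|m_N-\mu|\,(\log N/\log\log N)^{n-1}$, which does not tend to $0$ for $n\ge 3$.
\end{itemize}

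\textbf{The fix.} Your fallback is the correct route. It is how the paper justifies this step in the proof of Theorem \ref{positivepercent}; in the proof of the present theorem the paper leaves it implicit. You do not even need absolute moments. Write $(D-\mu)^n=\sum_{k=0}^{n}\binom{n}{k}(D-m_N)^k(m_N-\mu)^{n-k}$ and treat the terms as follows.
\begin{itemize}
\item The $k=n$ term averages to the $n$-th centered moment.
\item The $k=1$ average vanishes identically.
\item The $k=0$ term is $(m_N-\mu)^n\to 0$.
\item For $2\le k\le n-1$, the averages $\langle (D-m_N)^k\rangle_{N;+}$ converge by Theorem \ref{densityeqn}. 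The support $\sigma/n=1/(n-a)\le 2/(n+1)$ lies inside the range $1/\lceil k/2\rceil\ge 2/(n-1)$ allowed at order $k$.
\end{itemize}
With this replacement your proof is complete. It even gives $\liminf\ge S(n,\sigma;\phi_\omega)-(-\mu)^n>0$.

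\textbf{A minor point.} Take the window count over the open interval. $R_f(\omega)$ counts the closed window, but zeros at $\pm\omega$ contribute $\phi_\omega(\pm\omega)=0$, and $b_{\omega,N}$ refers to $(-\omega,\omega)$.
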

\begin{proof}
    First, note that $\phi_\om$ is non-positive when $|x|> \om$. Since $n$ is odd, we see that
    \begin{equation}
        \lim_{\substack{N\to\infty \\ N \text{\rm prime}}} \frac{1}{|\mathcal{F}_{N}|}\sum_{f \in \mathcal{F}_{N}} \left( \sum_{|\gamma_{f, j}| \leq \omega}\phi_{\omega}(\widetilde{\gamma}_{f, j}) -  \mu(\phi_{\omega}, \mathcal{F}) \right)^{n} \ \geq \ \lim_{\substack{N\to\infty \\ N \text{\rm prime}}} \frac{1}{|\mathcal{F}_{N}|}\sum_{f \in \mathcal{F}_{N}} \left( \sum_{j}\phi_{\omega}(\widetilde{\gamma}_{f, j}) -  \mu(\phi_{\omega}, \mathcal{F}) \right)^{n}.
    \end{equation}
    Thus, 
    \begin{equation}
        \lim_{\substack{N\to\infty \\ N \text{\rm prime}}} \frac{1}{|\mathcal{F}_{N}|}\sum_{f \in \mathcal{F}_{N}} \left( \sum_{|\gamma_{f, j}| \leq \omega}\phi_{\omega}(\widetilde{\gamma}_{f, j}) -  \mu(\phi_{\omega}, \mathcal{F}) \right)^{n} \ \geq \  S(n, \si; \phi_{\omega}).
    \end{equation}
    Now, by our definition of $a_{\om,N}$ and $b_{\om,N}$, we see that the expression within the limit is given by
    \begin{equation}
        -\mu^n\f{|\Ff_N|-a_{\om,N}}{|\Ff_N|} + \f{1}{|\Ff_N|} \sum_{f\in \widetilde{\Ff_N}} \left( \sum_{|\ga_{f,j}|<\om} \phi_{\om}(\widetilde{\ga}_{f,j}) - \mu \right)^n.
    \end{equation}
    We can rewrite this as 
    \begin{equation}
        -\mu^n + \f{1}{|\Ff_N|} \left(\sum_{f\in \widetilde{\Ff_N}} \left( \sum_{|\ga_{f,j}|<\om} \phi_{\om}(\widetilde{\ga}_{f,j}) - \mu \right)^n +a_{\om,N}\mu^n\right) .
    \end{equation} 
    Let $B$ bound $\phi_\om$. Then, we have
    \begin{equation}
        -\mu^n + \f{1}{|\Ff_N|} \left(\sum_{f\in \widetilde{\Ff_N}} \left( \sum_{|\ga_{f,j}|<\om} \phi_{\om}(\widetilde{\ga}_{f,j}) - \mu \right)^n +a_{\om,N}\mu^n\right) \ \leq \ -\mu^n  +\f{a_{\om,N}\left( (Bb_{\om,N} - \mu) ^n+\mu^n\right)}{|\Ff_N|}.
    \end{equation} 
    Since $n$ is odd, this simplifies to 
    \begin{equation}
        -\mu^n+\f{a_{\om,N}}{|\Ff_N|}\sum_{i=1}^n\binom{n}{i}(Bb_{\om,N})^i(-\mu)^{n-i}.
    \end{equation}
    Putting these equations together, we have
    \begin{equation}
        \lim_{\substack{N\to\infty \\ N \text{\rm prime}}}\left(-\mu^n+\f{a_{\om,N}}{|\Ff_N|}\sum_{i=1}^n\binom{n}{i}(Bb_{\om,N})^i(-\mu)^{n-i}\right) \ \geq \ S(n,\si;\phi_\om).
    \end{equation}
    If 
    \begin{equation}
        \lim_{\substack{N\to\infty \\ N \text{\rm prime}}}\left(\f{a_{\om,N}}{|\Ff_N|}\sum_{i=1}^n\binom{n}{i}(Bb_{\om,N})^i(-\mu)^{n-i}\right) \ = \ 0,
    \end{equation}
    then 
    \begin{equation}
        -\mu^n \ \geq \ S(n,\si;\phi_\om).
    \end{equation}
    However, by Lemma \ref{mean lemma}, this implies that
    \begin{equation}
        -\left(\hat{\phi}(0)+\f{1}{2}\int_{-\infty}^\infty\hat{\phi}(y)dy\right)^n \ \geq \ S(n,\si;\phi_\om)
    \end{equation}
    yielding a contradiction.
\end{proof}

\begin{corollary}
    Assume GRH. If $\om$ satisfies
    \begin{equation}
        -\left(\hat{\phi}(0)+\f{1}{2}\int_{-\infty}^\infty\hat{\phi}(y)dy\right) \ < \ S(1, \si; \phi_{\omega}),
    \end{equation}
    then as $N$ tends to $\infty$ through the primes,
    \begin{align}
        \f{a_{\om,N}b_{\om,N}}{|\Ff_N|}
    \end{align}    
    cannot go to 0. 
\end{corollary}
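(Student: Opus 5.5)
The corollary is the case $n=1$ of the preceding theorem, so the plan is to specialize that argument. For $n=1$ the centered "moment" is just the one-level density minus the mean, and $S(1,\si;\phi_\om)$ is the lower-order term of the one-level density as in the remark following Theorem \ref{densityeqn}. The sum $\sum_{i=1}^n\binom{n}{i}(Bb_{\om,N})^i(-\mu)^{n-i}$ then collapses to the single term $Bb_{\om,N}$. The quantity controlled by the theorem therefore becomes $B\,a_{\om,N}b_{\om,N}/|\Ff_N|$, and since $B>0$ is a fixed bound for $\phi_\om$ depending only on $\om,\si,h$, this quantity tends to $0$ if and only if $a_{\om,N}b_{\om,N}/|\Ff_N|$ does.

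To keep the argument self-contained, I would redo the chain of inequalities directly for $n=1$.

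First, since $\phi_\om\le 0$ for $|x|>\om$, discarding the zeros outside the window can only increase $D(f;\phi_\om)$. Averaging over $\Ff_N$ and using the one-level density asymptotic (from \cite{ILS} and \cite{ARORA2025262}, within the allowed support) gives
\begin{equation}
\liminf_{\substack{N\to\infty\\N\text{ prime}}}\f{1}{|\Ff_N|}\sum_{f\in\Ff_N}\left(\sum_{|\widetilde{\ga}_{f,j}|\le\om}\phi_\om(\widetilde{\ga}_{f,j})-\mu\right)\ \ge\ S(1,\si;\phi_\om).
\end{equation}

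Next, I split the family. Forms outside $\widetilde{\Ff}_N$ contribute exactly $-\mu$. Each form in $\widetilde{\Ff}_N$ contributes at most $Bb_{\om,N}-\mu$. The left side is therefore at most
\begin{equation}
-\mu+\f{B\,a_{\om,N}b_{\om,N}}{|\Ff_N|}.
\end{equation}
Note that, unlike the odd-$n$ case, no expansion of a power is needed.

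Finally, I argue by contradiction. If $a_{\om,N}b_{\om,N}/|\Ff_N|\to 0$, then $-\mu\ge S(1,\si;\phi_\om)$. By Lemma \ref{mean lemma}, $\mu=\hat\phi(0)+\tfrac12\int\hat\phi$, so this contradicts the hypothesis.

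The main point needing care is the first step. For $n=1$ the relevant input is the one-level density formula rather than Theorem \ref{densityeqn}, which requires $n\ge2$. I would need to confirm that the limit of the average of $D(f;\phi_\om)-\mu$ equals $S(1,\si;\phi_\om)$ in the normalization used in \cite{ARORA2025262}. That requires $\supp\hat\phi_\om\subset(-\si,\si)$ with $\si<2$, since for $n=1$ the support condition of the test function reads $(-\si/n,\si/n)=(-\si,\si)$.

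A second caveat concerns the phrase "cannot go to 0". The argument as written rules out convergence to $0$, and really the statement should be read as $\limsup>0$. To get this, I would apply the $\liminf$ inequality above along any subsequence. If some subsequence had $a_{\om,N}b_{\om,N}/|\Ff_N|\to0$, the same contradiction arises along it, which shows the stronger $\liminf>0$.
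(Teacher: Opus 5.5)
Your proposal is correct and follows the paper's route: the corollary is the $n=1$ case of the preceding odd-$n$ theorem, where the binomial sum collapses to $Bb_{\omega,N}$, and assuming $a_{\omega,N}b_{\omega,N}/|\mathcal{F}_N|\to 0$ forces $-\mu\ge S(1,\sigma;\phi_\omega)$. Your two caveats are sound refinements that the paper leaves implicit: for $n=1$ the input must be the one-level density, with $S(1,\sigma;\phi_\omega)$ read as $\lim_{N}\langle D(f;\phi_\omega)\rangle_{N;+}-\mu$, rather than Theorem~\ref{densityeqn}; and the same inequality in fact gives $\liminf_{N} a_{\omega,N}b_{\omega,N}/|\mathcal{F}_N|\ge (S(1,\sigma;\phi_\omega)+\mu)/B>0$.
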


Now, let us find explicit bounds on $a_{\om,N}$.
We restate

\asymptoticthm*

\begin{proof}
    Let $\varphi$ be a test function such that $\varphi \geq 0$ and $\varphi \geq 1$ on $[-\omega,\omega]$. We also need $\hat{\varphi}$ to have compact support in $[-1/2,1/2]$.

    To verify that this is indeed possible, we give the following example:
    \begin{equation}
        \varphi(x)  \ = \  \left( \frac{\sin\left(\frac{\pi \sigma x}{4}\right)}{\frac{\pi \sigma x}{4}} \right)^4 \left( \frac{\frac{\pi  \omega}{4}}{\sin\left(\frac{\pi  \omega}{4}\right)} \right)^4
    \end{equation}
    where we can adjust $\om$ slightly to avoid dividing by 0.
    Since $|\Ff_N|$ is finite, the maximum number of zeros below $\om$ is attained by some form, $f_0$.
    We see that for such a form $f_0$, we have
    \begin{equation}
        b_{\om,N} \leq \sum_{\gamma \leq \omega}\varphi(\gamma) \leq \sum_\gamma \varphi(\gamma)  \ = \  D(f_0,\varphi).
    \end{equation}
    where the sum is taken over the nontrivial zeros of the $L$-function associated to $f_0$, counted with multiplicity.
    
    Now, from \cite{ILS}, we have
    \begin{align}
    D(f;\varphi) & \ = \  \int_{-\infty}^\infty \varphi \, dx \nonumber\\
    &\quad + \frac{2}{\log N} \sum_{p \neq N} \frac{\log p}{p} \hat{\varphi}\left(\frac{2 \log p}{\log N}\right) \nonumber\\
    &\quad - \frac{2}{\log N} \sum_{p \neq N} \frac{\lambda_f (p^2)\log p}{p} \hat{\varphi}\left(\frac{2 \log p}{\log N}\right) \nonumber\\
    &\quad - \frac{2}{\log N} \sum_{p \neq N} \frac{\lambda_f (p)\log p}{\sqrt{p}} \hat{\varphi}\left(\frac{2 \log p}{\log N}\right) + O\left(\frac{1}{\log N}\right).
    \end{align}

    Since $\hat{\varphi}$ has bounded support, the only non-trivial terms in the sum occur when $\log p \leq \frac{1}{4} \log N$. In fact, the support is contained in $[-1/2,1/2]$. Thus, we need $p \leq N^{1/4}$.  Let $B$ bound $\varphi$. We have
    \begin{align}
    D(f;\varphi) & \ \leq \ \int_{-\infty}^\infty \varphi \, dx \notag \\
    &\quad + \frac{2B}{\log N} \sum_{p \leq N^{1/4}} \frac{\log p}{p} \notag \\
    &\quad + \frac{2B}{\log N} \sum_{p \leq N^{1/4}} \frac{|\lambda_f (p^2)|\log p}{p} \notag \\
    &\quad + \frac{2B}{\log N} \sum_{p \leq N^{1/4}} \frac{|\lambda_f (p)|\log p}{\sqrt{p}}  + O\left(\frac{1}{\log N}\right).
    \end{align}
    It is known from Deligne's bound that $\abs{\lambda_f(p)} \ \leq \ 2$ and $\abs{\lambda_f(p^2)} \ \leq \ 3$; see \cite{iwaniec2004analytic}.
    Now, the first term of the density is a constant. We also know that
    \begin{align}
        \sum_{p \leq x}\f{\log p}{p}   \ = \ O(\log x) ,
    \end{align}
    and 
    \begin{align}
         \sum_{p \leq x} \f{\log p}{\sqrt{p}} \ = \  O(\sqrt{x}) .
    \end{align}
    Therefore,
    \begin{equation}
    D(f;\varphi) \ \leq \ C + O(1) + O(1) + O\left(\f{\sqrt{N^{1/4}}}{\log N}\right) + O\left(\frac{1}{\log N}\right).
    \end{equation}
    Then
    \begin{equation}
        b_{\om,N} \ \leq \ D(f; \varphi) \ \leq \ O\left( \f{N^{1/8}}{\log N}\right).
    \end{equation}
    Finally, since $|\Ff_N|  \ \asymp \ N$ and
    \begin{equation}
        \f{a_{\om,N}b_{\om,N}}{|\Ff_N|} \ \not\to \ 0,
    \end{equation}
    we must have
    \begin{equation}
        \limsup_{\substack{N\to\infty\\N \text{\rm prime}}}\f{a_{\om,N}}{N^{7/8} \log N} \ >\ 0.
    \end{equation}
\end{proof}

\explicitbd*
\section{Bounds on the Number of Zeros}\label{bounds}
The arguments in the previous subsection find a pointwise bound on the maximum number of zeros that a single form can have in a given window. We now move past this and take a different approach; rather than bounding every form individually, we use a second test function to bound a high moment of the number of zeros on average. This then lets us apply Cauchy--Schwarz to prove that a positive proportion of forms have a zero in our window $[-\om, \om]$. We begin by defining for $f \in \Ff_N$,
\begin{equation}
R_f(\om) \ :=\ \#\Set{j\mid \abs{\widetilde{\ga}_{f,j}}\leq\om}, \quad \text{and} \quad X_f \ :=\ \sum_{\abs{\widetilde{\ga}_{f,j}}\leq\om}\phi_{\om}(\widetilde{\ga}_{f,j})
\end{equation}
where the zeros are counted with multiplicity. Additionally, we write $\mu := \mu(\phi_{\om}, \Ff)$.

\subsection{An Auxiliary Test Function}
For the averaging argument, we need a test function with a different role from $\phi_{\om}$. This test function should be nonnegative everywhere and bounded below on the chosen window such that its one-level statistics dominate the actual number of zeros there. At the same time, its Fourier support must be small enough to actually apply the centered-moment theorem at order $2n$. Standard construction techniques are given in \cite{GoesMiller2010, hughesRudnick2003, ILS, ARORA2025262}.

\begin{lemma}\label{auxtestfunc}
Let $n$ be a positive integer and let $\om \ > 0 $. Then there exists a nonnegative even Schwartz function $\varphi$ such that
\begin{equation}
c_{\varphi} \ := \ \min_{\abs{x} \leq \om}\varphi(x) \ > \ 0
\end{equation}
and
\begin{equation}
\operatorname{supp}\left(\widehat{\varphi}\right) \ \subset \ \biggl(-\frac{1}{n}, \frac{1}{n}\biggr).
\end{equation}
\end{lemma}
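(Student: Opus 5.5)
We construct $\varphi$ explicitly as the square of a Fourier transform of a compactly supported bump, using the Fejér kernel. Fix $\delta$ with $0<\delta<\frac{1}{2n}$; for definiteness take $\delta = \frac{1}{4n}$. Let $\psi = \mathbbm{1}_{[-\delta/2,\delta/2]} * \mathbbm{1}_{[-\delta/2,\delta/2]}$, the triangle function supported in $[-\delta,\delta]$. Then
\begin{equation}
\widehat{\psi}(x) \ = \ \left(\frac{\sin(\pi\delta x)}{\pi x}\right)^2 \ \geq \ 0 .
\end{equation}
Define
\begin{equation}
\varphi(x) \ := \ \widehat{\psi}(x)^2 \ = \ \left(\frac{\sin(\pi\delta x)}{\pi x}\right)^4 .
\end{equation}
This $\varphi$ is even and nonnegative, and it is a power of a sinc function, as in the test function used in the proof of Theorem \ref{asymptoticthm}. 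Its Fourier transform is $\psi*\psi$, so
\begin{equation}
\operatorname{supp}\bigl(\widehat{\varphi}\bigr) \ \subset \ [-2\delta,2\delta] \ = \ \left[-\tfrac{1}{2n},\tfrac{1}{2n}\right] \ \subset \ \left(-\tfrac1n,\tfrac1n\right).
\end{equation}

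Next we check positivity on the window. The zeros of $\varphi$ are exactly the points $x = m/\delta$ with $m\in\mathbb{Z}\setminus\{0\}$; at $x=0$ we have $\varphi(0)=\delta^4>0$. So $\varphi>0$ on $(-1/\delta,1/\delta)$. Since $\omega$ is fixed, we may additionally shrink $\delta$, replacing it by $\delta = \min\bigl(\frac{1}{4n}, \frac{1}{2\omega}\bigr)$. Then $[-\omega,\omega]\subset(-1/\delta,1/\delta)$, and $\varphi$ is continuous and strictly positive on this compact interval. Hence $c_\varphi = \min_{|x|\le\omega}\varphi(x)>0$. Explicitly, $\sin(\pi\delta x)/(\pi\delta x)$ is decreasing on $[0,1/\delta)$, which gives $c_\varphi = \varphi(\omega)$.

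The remaining issue, and the only real subtlety, is the word \emph{Schwartz}. The function above decays only like $|x|^{-4}$, since $\widehat{\varphi}$ is merely piecewise smooth ($C^2$). This decay suffices for every use of the explicit formula, but to meet the statement literally we replace $\psi$ by a smooth bump. Take $\eta\in C_c^\infty$, even, nonnegative, supported in $[-\delta/2,\delta/2]$ and not identically zero, and set $\psi = \eta * \eta$. Then $\widehat{\psi} = \widehat{\eta}^{\,2}\ge 0$ is real, even and Schwartz. It is also strictly positive near $0$, because $\widehat\psi(0)=(\int\eta)^2>0$. Setting $\varphi = \widehat{\psi}^{\,2}$ gives an even, nonnegative Schwartz function with $\operatorname{supp}\widehat\varphi\subset[-2\delta,2\delta]$.

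Positivity on all of $[-\omega,\omega]$ follows by rescaling. Replace $\eta$ by $\eta_\lambda(y)=\lambda^{-1}\eta(y/\lambda)$ with $0<\lambda\le1$. This shrinks the support further and stretches $\widehat{\eta}_\lambda(x)=\widehat\eta(\lambda x)$. Since $\widehat\eta(0)>0$ and $\widehat{\eta}$ is continuous, $\widehat\eta>0$ on some interval $[-r,r]$. Choosing $\lambda\le r/\omega$ makes $\widehat{\eta}_\lambda>0$ on $[-\omega,\omega]$. Then $\varphi>0$ there, and compactness gives $c_\varphi>0$, completing the proof.
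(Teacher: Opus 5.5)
Your proof is correct and follows essentially the same route as the paper: take a smooth even nonnegative bump in frequency space, pass to its Fourier transform (positive near $0$), raise it to an even power so that it is nonnegative and its transform is a compactly supported convolution, and rescale so the positivity region covers $[-\omega,\omega]$. The differences are cosmetic: the paper uses $\varphi = q_T^2$ where you use $\widehat{\eta}_\lambda^{\,4}$, and your Fej\'er-kernel detour, which you correctly note is not Schwartz, is not needed.
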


\begin{proof}
Choose a nonzero real-valued even nonnegative function $\eta \in C_c^\infty(\R)$ supported in $(-\frac{1}{4n},\frac{1}{4n})$, and let $q$ be its inverse Fourier transform. Since $\eta$ is real-valued and even, $q$ is real-valued and even. We have
\begin{equation}
q(0)\ =\ \int_{-\infty}^{\infty}\eta(y)dy>0.
\end{equation}
Thus, since $q$ is continuous there exists some $\delta > 0$ such that $q(x) > 0$ whenever $\abs{x} \leq \delta$. Now, choose $T > \max\{\om/\delta, 1\}$, and define
\begin{equation}
q_T(x) \ := \ q(x/T) \qquad \text{and} \qquad \varphi(x) \ := \ q_T(x)^2.
\end{equation}
The function $\varphi$ is nonnegative, even, and Schwartz. Thus, if $\abs{x} \leq \om$, then $\abs{x/T} < \delta$. Hence, $q_T(x) > 0$, which implies $c_{\varphi} > 0$. All that remains to do is to check the support, and by scaling, we can see that $\widehat{q_T}$ is supported on $(-\frac{1}{4nT},\frac{1}{4nT})$. Thus, since the Fourier transform of a product is a convolution, we have that
\begin{equation}
\widehat{\varphi} \ = \ \widehat{q_T}*\widehat{q_T},
\end{equation}
yielding
\begin{equation}
\supp(\widehat{\varphi})\ \subset\ \left(-\f{1}{2nT},\f{1}{2nT}\right)\ \subset\ \left(-\f{1}{n},\f{1}{n}\right).
\end{equation}
\end{proof}

\subsection{A High-Moment Bound For The Zero Count}
\begin{lemma}\label{highmomentbound}
Let $n$ be a positive integer and let $\varphi$ be the function from Lemma \ref{auxtestfunc}. Then there exists a constant $C_{\varphi, n, \om} > 0$ that is independent of $N$ such that
\begin{equation}
\f{1}{\abs{\Ff_N}}\sum_{f\in\Ff_N}R_f(\om)^{2n} \ \leq \ C_{\varphi, n, \om}
\end{equation}
for all sufficiently large prime $N$.
\end{lemma}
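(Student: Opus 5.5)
The plan is to dominate $R_f(\om)$ pointwise by the one-level density of $\varphi$, and then to control the $2n$-th moment of $D(f;\varphi)$ using the centered-moment formula of Theorem \ref{densityeqn}.

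\textbf{Pointwise domination.} Since $\varphi\geq 0$ everywhere and $\varphi\geq c_\varphi$ on $[-\om,\om]$, dropping the zeros outside the window gives
\begin{equation}
R_f(\om) \ \leq \ c_\varphi^{-1}\sum_{\abs{\widetilde{\ga}_{f,j}}\leq\om}\varphi(\widetilde{\ga}_{f,j}) \ \leq \ c_\varphi^{-1}D(f;\varphi).
\end{equation}
Hence
\begin{equation}
R_f(\om)^{2n} \ \leq \ c_\varphi^{-2n}D(f;\varphi)^{2n},
\end{equation}
and it suffices to bound $\la D(f;\varphi)^{2n}\ra_{N;+}$ uniformly in large prime $N$.

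\textbf{Reduction to centered moments.} Write $D(f;\varphi)=(D(f;\varphi)-m_N)+m_N$ with $m_N:=\la D(f;\varphi)\ra_{N;+}$. The binomial theorem, or simply $(x+y)^{2n}\leq 2^{2n-1}(x^{2n}+y^{2n})$, gives
\begin{equation}
\la D^{2n}\ra \ \leq \ 2^{2n-1}\left(\mathcal{D}^+_{2n}(N,\varphi)+m_N^{2n}\right).
\end{equation}

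\textbf{Bounding the mean.} The mean $m_N$ converges to $\mu(\varphi,\Ff)$ by Lemma \ref{mean lemma}, since $\supp\hat\varphi\subset(-1/n,1/n)\subset(-1,1)$. This is the one-level density range of \cite{ILS}. So $m_N$ is bounded for large $N$.

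\textbf{Bounding the centered moment.} For $\mathcal{D}^+_{2n}(N,\varphi)$, apply Theorem \ref{densityeqn} with moment order $2n$. Taking $a=n$, the required support is $(-1/(2n-n),1/(2n-n))=(-1/n,1/n)$, which is exactly what Lemma \ref{auxtestfunc} provides. Then $\mathcal{D}^+_{2n}(N,\varphi)$ converges to the finite quantity
\begin{equation}
(2n-1)!!(V_\varphi^2)^n+S(2n,n;\varphi).
\end{equation}
A convergent sequence is bounded for all sufficiently large prime $N$. Combining the three bounds yields a constant $C_{\varphi,n,\om}$ independent of $N$.

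\textbf{Main obstacle.} The main subtlety is the applicability of Theorem \ref{densityeqn} at the endpoint $a=n/2$ of the allowed range, with the moment index $2n$ in place of $n$. If that edge case is problematic, I would first rebuild $\varphi$ in Lemma \ref{auxtestfunc} with strictly smaller support, for instance inside $(-1/(2n),1/(2n))$ by taking $T$ larger. Then $a=0$ suffices and only the main Gaussian term $(2n-1)!!(V_\varphi^2)^n$ survives. This is harmless, since the proof of that lemma already gives support inside $(-1/(2nT),1/(2nT))$ with $T>1$. GRH is assumed throughout, as in Theorem \ref{densityeqn}.
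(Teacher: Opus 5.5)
Your proposal is correct and follows the paper's proof essentially step for step. The steps are the pointwise bound $c_\varphi R_f(\omega)\le D(f;\varphi)$, the split $D^{2n}\le 2^{2n-1}\left(|D-m_N|^{2n}+|m_N|^{2n}\right)$, boundedness of $m_N$ from the one-level density, and boundedness of the $2n$-th centered moment from Theorem \ref{densityeqn} with $a=n$. Your side remark is also valid: Lemma \ref{auxtestfunc} actually gives support in $(-1/(2nT),1/(2nT))\subset(-1/(2n),1/(2n))$, so one could take $a=0$ and avoid the endpoint case altogether.
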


\begin{proof}
Since $\varphi \geq c_{\varphi}$ on $[-\om, \om]$, every zero in this window contributes at least $c_{\varphi}$ to $D(f;\varphi)$. Therefore, we have that
\begin{equation}
c_{\varphi}R_f(\om) \ \leq \ D(f;\varphi)
\end{equation}
and further 
\begin{equation}\label{densitycontrolledzeros}
R_f(\om)^{2n} \ \leq \ c_{\varphi}^{-2n}D(f;\varphi)^{2n}.
\end{equation}
Now, an application of the centered-moment theorem at moment order $2n$, with its parameter set equal to $n$, gives the support condition
\begin{equation}
\supp(\widehat{\varphi}) \ \subset \ \left(-\f{1}{2n-n},\f{1}{2n-n}\right) \ = \ \left(-\f{1}{n},\f{1}{n}\right)
\end{equation}
which is exactly the support we can obtain from earlier with Lemma \ref{auxtestfunc}. Thus, we can say that the $2n$-th centered moment is bounded, consequently giving
\begin{equation}
\f{1}{\abs{\Ff_N}}\sum_{f\in\Ff_N}\abs{D(f;\varphi)-\la D(f;\varphi)\ra_{N;+}}^{2n} \ \ll_{\varphi,n} \ 1.
\end{equation}
Therefore, our one-level density formula \cite{barrett2017} yields $\la D(f;\varphi)\ra_{N;+}\to\mu(\varphi,\Ff)$, implying that our means are also uniformly bounded. Hence, if we use the fact that
\begin{equation}
D(f;\varphi)^{2n} \ \leq \ 2^{2n-1}\left(\abs{D(f;\varphi)-\la D(f;\varphi)\ra_{N;+}}^{2n} + \abs{\la D(f;\varphi)\ra_{N;+}}^{2n}\right)
\end{equation}
we can write that
\begin{equation}
\f{1}{\abs{\Ff_N}}\sum_{f\in\Ff_N}D(f;\varphi)^{2n} \ \ll_{\varphi,n} \ 1.
\end{equation}
Combining the preceding estimate with equation~\eqref{densitycontrolledzeros} gives us the desired bound.
\end{proof}

This lemma provides the uniform $2n$-th moment bound needed in the averaging argument. The odd moment will give a positive average supported on forms with a zero in the window, while the current even-moment estimate tells us that this average cannot be produced by an asymptotically negligible set of forms.

\subsection{A Positive Proportion of Forms} 
We proceed by using the estimates we have obtained previously to show that a positive proportion of the family has a zero in the interval $(-\om,\om)$.

\positivepercent*

\begin{proof}
For each $f \in \Ff_N$ we define
\begin{equation}
\Pi_f \ := \ (X_f-\mu)^n-(-\mu)^n.
\end{equation}
We note that $\Pi_f$ is nonnegative. In fact, $\phi_{\om}$ is nonnegative inside the window, hence, $X_f \geq 0$. Then since $n$ is odd, we have that the function $x \mapsto x^n$ is increasing, so
\begin{equation}
(X_f-\mu)^n \ \geq \ (-\mu)^n.
\end{equation}
Additionally, if $f$ has no normalized zeros in the window $(-\om, \om)$ then $X_f = 0$ and $\Pi_f = 0$. Therefore, the only forms which contribute to the sum of the $\Pi_f$ are the forms in $\widetilde{\Ff}_N$. Since $\phi_{\om}$ is nonpositive outside $[-\om,\om]$, removing these nonpositive contributions can only increase the quantity inside an odd power. If we follow the same replacement of the finite-level mean by the limiting mean used in the proof of the first-zero theorem, we can say that
\begin{equation}
\liminf_{\substack{N \to \infty \\ N \text{\rm prime}}} \f{1}{\abs{\Ff_N}}\sum_{f \in \Ff_N}(X_f-\mu)^n \ \geq \ S(n,a;\phi_{\om}).
\end{equation}
Indeed, writing $m_N=\la D(f;\phi_{\om})\ra_{N;+}$, the binomial expansion of
\begin{equation}
D(f;\phi_{\om})-\mu
=
\left(D(f;\phi_{\om})-m_N\right)+(m_N-\mu)
\end{equation}
shows that the terms involving $m_N-\mu$ tend to zero, since $m_N\to\mu$ and the lower centered moments are bounded under the same support condition.

Then let
\begin{equation}
\Delta \ := \ S(n,a;\phi_{\om})-(-\mu)^n.
\end{equation}
Our earlier assumption gives us that $\Delta > 0$. Hence, subtracting $(-\mu)^n$ from the previous mean gives 
\begin{equation}
\liminf_{\substack{N \to \infty \\ N \text{\rm prime}}} \f{1}{\abs{\Ff_N}}\sum_{f \in \Ff_N}\Pi_f \ \geq \ \Delta.
\end{equation}
Therefore, for all sufficiently large prime $N$ we write
\begin{equation}\label{positiveavg}
\f{1}{\abs{\Ff_N}}\sum_{f \in \widetilde{\Ff}_N}\Pi_f \ \geq \ \f{\Delta}{2}.
\end{equation}
Note that the $\Delta/2$ is used only so that the inequality holds for every sufficiently large $N$ rather than only in the limit. Now, we ensure that a very small number of forms cannot make up the entire positive average, so let
\begin{equation}
B_{\om} \ := \ \max_{\abs{x} \ \leq \ \om}\{\phi_{\om}(x)\}.
\end{equation}
Then $0 \leq X_f \leq B_{\om}R_f(\om)$, and expanding $\Pi_f$ yields
\begin{equation}
\Pi_f \ = \ \sum_{i=1}^n\binom{n}{i}X_f^i(-\mu)^{n-i}.
\end{equation}
Further, if $R_f(\om) = 0$ then $\Pi_f = 0$. Otherwise $R_f(\om)\ \geq \ 1$ and each power $X_f^i$ is bounded by some constant times $R_f(\om)^n$. Therefore,
\begin{equation}
\Pi_f^2 \ \ll_{n,\mu,\om} \ R_f(\om)^{2n}.
\end{equation}
By applying Lemma \ref{highmomentbound}, there exists a constant $C > 0$ that is independent of $N$ such that
\begin{equation}\label{secondmomentbound}
\f{1}{\abs{\Ff_N}}\sum_{f\in\Ff_N}\Pi_f^2 \ \leq \ C
\end{equation}
for all sufficiently large prime $N$. Applying Cauchy--Schwarz to equation~\eqref{positiveavg} gives $\abs{\widetilde{\Ff}_N} = a_{\om, N}$, so we obtain

\begin{align}
\f{\Delta}{2} \ &\leq \ \f{1}{\abs{\Ff_N}}\sum_{f\in\widetilde{\Ff}_N}\Pi_f \ \leq \ \left(\f{1}{\abs{\Ff_N}}\sum_{f\in\Ff_N}\mathbbm{1}_{\widetilde{\Ff}_N}(f)\right)^{1/2}\left(\f{1}{\abs{\Ff_N}}\sum_{f\in\Ff_N}\Pi_f^2\right)^{1/2} \ \leq \ \left(\f{a_{\om,N}}{\abs{\Ff_N}}\right)^{1/2}C^{1/2}.
\end{align}
Squaring both sides gives 
\begin{equation}
\f{a_{\om,N}}{\abs{\Ff_N}} \ \geq \ \f{\Delta^2}{4C},
\end{equation}
and since the right-hand side is a positive constant with no dependence on $N$, taking the limit infimum yields our desired result.
\end{proof}

\subsection{A Linear Lower Bound For The Number Of Forms}
Since the family itself has size comparable to $N$, the positive-proportion statement becomes a linear lower bound for the number of forms with a zero in the window.

\linearnumberofforms*

\begin{proof}
From Theorem \ref{positivepercent} we can say that for all sufficiently large prime $N$
\begin{equation}
a_{\om,N} \ \geq \ \f{\Delta^2}{4C}\abs{\Ff_N}.
\end{equation}
Since $\frac{\Delta^2}{4C}$ is a fixed positive constant, we have that $a_{\om,N}\gg_{\om,n}\abs{\Ff_N}$. Moreover, through the standard dimension estimate, we obtain our second claim as $\abs{\Ff_N}\asymp_k N$.
\end{proof}

\subsection{Upper-Tail and Zero-Count Consequences}
The same high-moment estimate also gives information in the opposite direction; forms with a large number of zeros in the window must be rare.

\begin{corollary}\label{zerotail}
Under the same assumptions as Theorem \ref{positivepercent}, there is a constant $C_1>0$ that is independent of $N$ and $r$ such that for every integer $r\geq1$,
\begin{equation}
\limsup_{\substack{N \to \infty \\ N \text{\rm prime}}} \f{\abs{\Set{f \in \Ff_N\mid R_f(\om) \ \geq \  r}}}{\abs{\Ff_N}} \ \leq \ \f{C_1}{r^{2n}}.
\end{equation}
\end{corollary}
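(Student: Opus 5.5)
This follows from Markov's inequality applied to the $2n$-th moment bound of Lemma \ref{highmomentbound}. Fix the odd $n\geq 3$ from the hypotheses of Theorem \ref{positivepercent}. Let $\varphi$ be the auxiliary test function of Lemma \ref{auxtestfunc}, which is nonnegative, bounded below by $c_\varphi>0$ on $[-\om,\om]$, and has $\supp(\widehat{\varphi})\subset(-1/n,1/n)$. Lemma \ref{highmomentbound} then provides a constant $C_{\varphi,n,\om}$, independent of $N$, such that
\begin{equation*}
\f{1}{\abs{\Ff_N}}\sum_{f\in\Ff_N}R_f(\om)^{2n} \ \leq \ C_{\varphi,n,\om}
\end{equation*}
for all sufficiently large primes $N$. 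We take $C_1:=C_{\varphi,n,\om}$. It depends only on $\varphi$, $n$ and $\om$, and so neither on $N$ nor on $r$.

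Next we apply Markov's inequality pointwise. For any integer $r\geq 1$ and any $f$ with $R_f(\om)\geq r$, we have $R_f(\om)^{2n}/r^{2n}\geq 1$. Since $R_f(\om)\geq 0$ for every $f$,
\begin{equation*}
\mathbbm{1}_{\{R_f(\om)\geq r\}} \ \leq \ \f{R_f(\om)^{2n}}{r^{2n}}
\end{equation*}
holds for all $f\in\Ff_N$. Summing over $\Ff_N$ and dividing by $\abs{\Ff_N}$ gives
\begin{equation*}
\f{\abs{\Set{f\in\Ff_N\mid R_f(\om)\geq r}}}{\abs{\Ff_N}} \ \leq \ \f{1}{r^{2n}}\cdot\f{1}{\abs{\Ff_N}}\sum_{f\in\Ff_N}R_f(\om)^{2n} \ \leq \ \f{C_1}{r^{2n}}
\end{equation*}
for all sufficiently large primes $N$. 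The threshold on $N$ comes only from Lemma \ref{highmomentbound}, so it is the same for every $r$. Taking $\limsup$ over primes $N\to\infty$ gives the stated bound.

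The argument has no real obstacle. The one point to check is that the constant in Lemma \ref{highmomentbound} is uniform in $r$. It is, because $r$ never enters that lemma: the lemma bounds a fixed moment of $R_f(\om)$, and $r$ appears only in the Markov step. All the analytic input, namely the centered-moment theorem at order $2n$ with $a=n$ and the support condition $(-1/n,1/n)$, is already contained in Lemma \ref{highmomentbound}.
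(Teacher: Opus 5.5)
Your proof is correct and is essentially the paper's argument: the paper also takes $C_1$ to be the constant from Lemma \ref{highmomentbound} and notes that each form with $R_f(\om)\geq r$ contributes at least $r^{2n}$ to the $2n$-th moment sum, which is exactly your Markov step. Your remark that the threshold in $N$ is uniform in $r$ is correct but not needed, since the $\limsup$ is taken separately for each fixed $r$.
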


\begin{proof}
By Lemma \ref{highmomentbound}, we have that for all sufficiently large prime $N$
\begin{equation}
\f{1}{\abs{\Ff_N}}\sum_{f\in\Ff_N}R_f(\om)^{2n} \ \leq \ C_1.
\end{equation}
With this, we further see that every form with $R_f(\om)\geq r$ contributes at least $r^{2n}$ to this sum and thus
\begin{align}
r^{2n}\abs{\Set{f\in\Ff_N\mid R_f(\om) \ \geq \ r}} \ &\leq \ \sum_{\substack{f\in\Ff_N\\R_f(\om) \ \geq \ r}}R_f(\om)^{2n}
\ \leq \ \sum_{f\in\Ff_N}R_f(\om)^{2n}.
\end{align}
Consequently, if we divide by $r^{2n}\abs{\Ff_N}$ and take the limit supremum then we achieve our desired result.
\end{proof}

We can now combine Theorem \ref{positivepercent} with the upper-tail bound. By choosing the threshold sufficiently large, we obtain a positive proportion of forms whose number of zeros in the window is both nonzero and uniformly bounded.

\begin{corollary}\label{boundedzerocount}
Under the hypotheses of Theorem \ref{positivepercent}, there exists an integer $r_0 \geq 2$ such that
\begin{equation}
\liminf_{\substack{N \to \infty \\ N \text{\rm prime}}}\f{\abs{\Set{f \in \Ff_N\mid 1 \ \leq \ R_f(\om) \ < \ r_0}}}{\abs{\Ff_N}} \ > \ 0.
\end{equation}
\end{corollary}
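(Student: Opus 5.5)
The plan is to subtract the upper-tail bound of Corollary \ref{zerotail} from the positive-proportion lower bound of Theorem \ref{positivepercent}. The proof of Theorem \ref{positivepercent} gives, for all sufficiently large prime $N$,
\begin{equation}
\f{a_{\om,N}}{\abs{\Ff_N}} \ \geq \ c_0 \ := \ \f{\Delta^2}{4C} \ > \ 0,
\end{equation}
where $a_{\om,N}=\abs{\Set{f\in\Ff_N\mid R_f(\om)\geq 1}}$. One small point must be checked here. Theorem \ref{positivepercent} counts zeros in the open window $(-\om,\om)$, while $R_f(\om)$ counts zeros in the closed window. The set $\widetilde{\Ff}_N$ is contained in $\Set{f\mid R_f(\om)\geq1}$, so the lower bound transfers in the right direction.

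Next, Corollary \ref{zerotail} (really, the uniform-in-$N$ inequality in its proof, valid for all sufficiently large prime $N$) gives, for every $r\geq1$,
\begin{equation}
\f{\abs{\Set{f\in\Ff_N\mid R_f(\om)\geq r}}}{\abs{\Ff_N}} \ \leq \ \f{C_1}{r^{2n}}.
\end{equation}
Here $C_1$ is independent of both $N$ and $r$. I will use this finite-$N$ form rather than the limsup statement, because a liminf conclusion requires a bound that holds for each large $N$.

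Choose $r_0\geq2$ large enough that $C_1/r_0^{2n}\leq c_0/2$. Then, for all sufficiently large prime $N$,
\begin{equation}
\f{\abs{\Set{f\mid 1\leq R_f(\om)<r_0}}}{\abs{\Ff_N}} \ = \ \f{\abs{\Set{f\mid R_f(\om)\geq1}}-\abs{\Set{f\mid R_f(\om)\geq r_0}}}{\abs{\Ff_N}} \ \geq \ c_0-\f{c_0}{2} \ = \ \f{c_0}{2}.
\end{equation}
Taking the liminf gives the claim.

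The only real obstacle is bookkeeping. Both bounds must hold simultaneously for all large $N$, which the proofs supply: the bound $\Delta/2$ holds eventually, and Lemma \ref{highmomentbound} holds for all large $N$. The constants must also not depend on $r_0$, and they do not, since $c_0$ depends on $n,\om,\phi_\om,\varphi$ only and $C_1$ comes from Lemma \ref{highmomentbound}.
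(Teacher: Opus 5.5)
Your proof is correct and follows the paper's argument: you take the eventual lower bound on the proportion of forms with $R_f(\om)\geq 1$ from Theorem \ref{positivepercent}, and you subtract the Markov-type tail bound of Corollary \ref{zerotail} with $r_0$ chosen so that $C_1/r_0^{2n}$ is at most half that constant. Passing to the finite-$N$ bounds is harmless but not needed, since the paper's limsup tail bound already suffices via $\liminf(x_N-y_N)\geq\liminf x_N-\limsup y_N$; your open-versus-closed window check is correct, and the paper does not address it.
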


\begin{proof}
Theorem \ref{positivepercent} gives us the constant $c_{\om} > 0$ such that
\begin{equation}
\liminf_{\substack{N \to \infty \\ N \text{\rm prime}}} \f{\abs{\Set{f \in \Ff_N\mid R_f(\om) \ \geq \ 1}}}{\abs{\Ff_N}} \ \geq \ c_{\om}.
\end{equation}
On the other hand, we have that Corollary~\ref{zerotail} gives us
\begin{equation}
\limsup_{\substack{N \to \infty \\ N \text{\rm prime}}} \f{\abs{\Set{f \in \Ff_N\mid R_f(\om) \ \geq \ r}}}{\abs{\Ff_N}} \ \leq \ \f{C_1}{r^{2n}}.
\end{equation}
Therefore, if we choose a sufficiently large $r_0$ such that $\frac{C_1}{r_0^{2n}} < \frac{c_{\om}}{2}$, we can isolate the forms with at least one (but fewer than $r_0$) zeros by taking all forms with at least one zero and excluding those with $r_0$ or more zeros. Thus,
\begin{align}
\liminf_{\substack{N \to \infty \\ N \text{\rm prime}}} \f{\abs{\Set{f \in \Ff_N\mid 1 \ \leq \ R_f(\om) \ < \ r_0}}}{\abs{\Ff_N}} \ &\geq \ c_{\om}-\f{C_1}{r_0^{2n}} \notag \\
\ &> \ \f{c_{\om}}{2} \ > \ 0.
\end{align}
\end{proof}

\zerocountsubseq*

\begin{proof}
Let $r_0$ be the integer from Corollary~\ref{boundedzerocount}. We have
\begin{equation}
\Set{f \in \Ff_N\mid 1 \ \leq \ R_f(\om) \ < \ r_0} \ = \ \bigcup_{r = 1}^{r_0-1}\Set{f \in \Ff_N\mid R_f(\om) = r},
\end{equation}
where the union is disjoint. The left-hand set has positive lower density while the right-hand union contains only finitely many sets. If every set on the right had upper density zero, their finite union would also have upper density zero, contradicting Corollary~\ref{boundedzerocount}. Thus, at least one integer $r$ has the required positive upper density.
\end{proof}

\section{Acknowledgments}
This research was supported with funding from the National Science Foundation (grant DMS2341670), Missouri State University, University of Rochester, the University of Chicago, the University of Michigan, and Williams College.

\newpage
\section{Notation}\label{notation}
The following table collects the notation used throughout the paper. Unless otherwise stated, $k$ is fixed, $N$ tends to infinity through the primes, and all zeros are counted with multiplicity.
\vspace{0.5cm}

\begingroup
\small
\renewcommand{\arraystretch}{1.18}
\setlength{\tabcolsep}{8pt}
\setlength{\LTpre}{0pt}
\setlength{\LTpost}{0pt}
\begin{longtable}{
@{}
>{\raggedright\arraybackslash}p{0.23\textwidth}
>{\raggedright\arraybackslash}p{0.72\textwidth}
@{}}
\toprule
\textbf{Notation} & \textbf{Meaning} \\
\midrule
\endfirsthead
\toprule
\textbf{Notation} & \textbf{Meaning} \\
\midrule
\endhead
\midrule
\multicolumn{2}{r}{\textit{Continued on the next page}} \\
\endfoot
\bottomrule
\endlastfoot

\multicolumn{2}{@{}l}{\textbf{Modular forms and families}}\\
\addlinespace[2pt]
$k$ & The fixed even weight of the modular forms. \\
$N$ & The level of the family; in the limiting results, $N \to \infty$ through the primes. \\
$H_k^\star(N)$ & The family of normalized holomorphic cuspidal newforms of weight $k$ and level $N$. \\
$H_k^+(N)$, $H_k^-(N)$ & The subfamilies of newforms having root number $+1$ and $-1$, respectively. \\
$\Ff_N$ & The family of even newforms under consideration. \\
$\widetilde{\Ff}_N$ & The subfamily of forms in $\Ff_N$ having at least one normalized zero in $(-\om,\om)$. \\
$\e_f$ & The root number of $f$, defined by $\Lambda(s,f) = \e_f\Lambda(1-s,f)$, with $\e_f \in \{-1,1\}$. \\
$\lm_f(n)$ & The normalized $n$-th Fourier coefficient of $f$. \\
$L(s,f)$ & The $L$-function associated to $f$: $L(s,f)=\sum_{n=1}^{\infty}\lm_f(n)n^{-s}$. \\
$\Lambda(s,f)$ & The completed $L$-function associated to $f$. \\
\addlinespace[6pt]

\multicolumn{2}{@{}l}{\textbf{Zeros and normalized zeros}}\\
\addlinespace[2pt]
$c_f$ & The analytic conductor of $L(s,f)$. \\
$\rho_{f,j}$ & The $j$-th nontrivial zero of $L(s,f)$, written under GRH as $\rho_{f,j}\ = \ \frac12+i\ga_{f,j}$. \\
$\ga_{f,j}$ & The ordinate of the $j$-th nontrivial zero of $L(s,f)$. \\
$\widetilde{\ga}_{f,j}$ & The normalized ordinate of the $j$-th nontrivial zero of $L(s,f)$: $\widetilde{\ga}_{f,j} = \frac{\log c_f}{2\pi}\ga_{f,j}$. \\
$\om$ & The normalized window size. \\
$R_f(\om)$ & The number of normalized zeros of $L(s,f)$ in $[-\om,\om]$, counted with multiplicity $R_f(\om) = \#\Set{j:\abs{\widetilde{\ga}_{f,j}} \leq \om}$. \\

\addlinespace[6pt]
\multicolumn{2}{@{}l}{\textbf{Test functions and Fourier analysis}}\\
\addlinespace[2pt]
$\phi$ & Even Schwartz test function used in the centered-moment argument to weight normalized zeros. \\
$\varphi$ & Even Schwartz test function used to control the number of zeros in a fixed interval. \\
$\widehat{\phi}$ & The Fourier transform of $\phi$. \\
$\supp(\widehat{\phi})$ & The support of the Fourier transform. \\
$C_c^\infty(\R)$ & The space of infinitely differentiable compactly supported functions on $\R$. \\
$\eta$ & A nonzero even nonnegative function in $C_c^\infty(\R)$ supported in $\left(-\frac{1}{4n},\frac{1}{4n}\right)$. \\
$q$ & The inverse Fourier transform of $\eta$. \\
$\delta$ & A positive constant such that $q(x) > 0$ whenever $\abs{x} \leq \delta$. \\
$T$ & A scaling parameter satisfying $T > \max\{\frac{\om}{\delta},1\}$. \\
$q_T$ & The rescaled function $q_T(x) := q(x/T)$. \\
$\widehat{q_T}$ & The Fourier transform of $q_T$, supported in $\left(-\frac{1}{4nT},\frac{1}{4nT}\right)$. \\
$h$ & An even compactly supported auxiliary function on $[-1,1]$, decreasing
on $[0,1]$, from which an optimized test function is constructed. \\
$f(y)$ & The rescaled auxiliary function $f(y) = h(2yn/\si)$. \\
$g = f*f$ & The convolution of $f$ with itself. \\
$\phi_{\om}$ & The sign-changing test function $\phi_{\om}(x) = \ \left(1-\frac{x^2}{\om^2}\right)\widehat{g}(x)$. It is nonnegative for $\abs{x} \leq \om$ and nonpositive for $\abs{x} > \om$. \\
$B_{\om}$ & The maximum value of $\phi_{\om}$ on the zero-counting window: $B_{\om} = \max_{\abs{x} \leq \om}\{\phi_{\om}(x)\}$. \\
$c_{\varphi}$ & The positive lower bound of the auxiliary function $\varphi$ on $[-\om,\om]$: $c_{\varphi} = \min_{\abs{x}\leq\om}\varphi(x)$. \\

\addlinespace[6pt]
\multicolumn{2}{@{}l}{\textbf{Densities, averages, and moments}}\\
\addlinespace[2pt]
$D(f;\phi)$ & The one-level linear statistic $D(f;\phi) = \sum_j\phi(\widetilde{\ga}_{f,j})$. \\
$D_n(f;\Phi)$ & The $n$-level density of $L(s,f)$ with test function $\Phi:\R^n \to \R$. \\
$\la A_f\ra_{N;\pm}$ & The average of a quantity $A_f$ over the family $H_k^\pm(N)$. \\
$\mathcal D_n^\pm(N,\phi)$ & The $n$-th centered moment of the one-level statistic over the even-signed or odd-signed family: $\mathcal D_n^\pm(N,\phi) = \la \left(D(f;\phi)-\la D(f;\phi)\ra_{N;\pm}\right)^n\ra_{N;\pm}$. \\
$\mu(\phi,\Ff)$ & The limiting orthogonal mean: $\mu(\phi,\Ff) = \widehat{\phi}(0) + \frac12\int_{-\infty}^{\infty}\widehat{\phi}(y)\,dy$. When the test function is fixed, this is abbreviated to $\mu$. \\
$V_\phi^2$ & The variance term $V_\phi^2 = 2\int_{-\infty}^{\infty} \abs{y}\widehat{\phi}(y)^2\,dy$. \\
$S(n,a;\phi)$ & The lower-order contribution in the limiting centered-moment formula. \\
$\mathcal R(m,i;\phi)$ & The integral expression appearing in the definition of $S(n,a;\phi)$. \\
$n$ & The order of the centered moment. \\
$a$ & The integer parameter in the centered-moment theorem, satisfying $0\leq a\leq n/2$. \\
$\si$ & The support parameter $\si = \frac{n}{n-a}$ with $1\ \leq \ \si \leq 2$. The support condition is $\supp(\widehat{\phi})\subset\left(-\frac{\si}{n},\frac{\si}{n}\right)$. \\

\addlinespace[6pt]
\multicolumn{2}{@{}l}{\textbf{Quantities in the low-lying-zero bounds}}\\
\addlinespace[2pt]
$a_{\om,N}$ & The number of forms $f\in\Ff_N$ having at least one normalized zero in $(-\om,\om)$: $a_{\om,N} = \abs{\widetilde{\Ff}_N}$. \\
$b_{\om,N}$ & The maximum number of normalized zeros in $(-\om,\om)$ in any one form in $\widetilde{\Ff}_N$. \\
$X_f$ & The contribution from the zeros of $f$ in the window: $X_f = \sum_{\abs{\widetilde{\ga}_{f,j}}\leq\om}\phi_{\om}(\widetilde{\ga}_{f,j})$. \\
$\Pi_f$ & The nonnegative centered contribution $\Pi_f =  (X_f-\mu)^n-(-\mu)^n$ which vanishes whenever $f$ has no normalized zero in $(-\om,\om)$. \\
$\Delta$ & The positive moment gap $\Delta = S(n,a;\phi_{\om})-(-\mu)^n$. \\
$C$ & A constant, independent of $N$, satisfying $\frac{1}{\abs{\Ff_N}}\sum_{f\in\Ff_N}\Pi_f^2 \leq  C$. \\

\addlinespace[6pt]
\multicolumn{2}{@{}l}{\textbf{Additional constants and conventions}}\\
\addlinespace[2pt]
$C_{\varphi,n,\om}$ & A positive constant, independent of $N$, satisfying $\frac{1}{\abs{\Ff_N}}\sum_{f\in\Ff_N}R_f(\om)^{2n} \leq C_{\varphi,n,\om}$ for all sufficiently large prime $N$. \\
$C_1$ & A positive constant, independent of $N$ and $r$, used in the upper-tail estimate $\frac{\abs{\Set{f\in\Ff_N\mid R_f(\om)\geq r}}}{\abs{\Ff_N}} \leq \frac{C_1}{r^{2n}}$. \\
$c_{\om}$ & A fixed positive lower bound for the limiting proportion of forms having at least one normalized zero in $(-\om,\om)$. \\
$r$ & A positive integer used either as a threshold for the number of normalized zeros in the upper-tail estimate or as a fixed exact zero count. \\
$r_0$ & A sufficiently large fixed integer such that a positive proportion of forms satisfy $1\leq R_f(\om)<r_0$. \\
\end{longtable}
\endgroup

\newpage
\nocite{*}
\printbibliography

\end{spacing}
\end{document}